\providecommand{\keywords}[1]
{
	\small
	\textbf{\textit{Keywords---}} #1
}
\newtheorem{definition}{Definition}
\newtheorem{theorem}{Theorem}
\theoremstyle{definition}
\newtheorem{example}{Example}
\newtheorem{remark}{Remark}
\newcommand*\samethanks[1][\value{footnote}]{\footnotemark[#1]}
\newcommand\fract[5]{\ensuremath{\prescript{#1}{#3}{\mathcal{#5}}_{#4}^{#2}}}
\begin{document}
	\flushbottom
	\title{An Orthogonal Polynomial Kernel-Based Machine Learning Model for Differential-Algebraic Equations}

\author[1]{Tayebeh Taheri \thanks{Email: \{ttaherii1401, alirezaafzalaghaei\}@gmail.com}}
 
\author[1]{Alireza Afzal Aghaei\samethanks}	
 \author[1,2,3]{Kourosh Parand \thanks{Email: k\_parand@sbu.ac.ir, Corresponding author}}

	\affil[1]{\small{Department of Computer and Data Sciences, Faculty of Mathematical Sciences, Shahid Beheshti University, G.C. Tehran, Iran}}
	\affil[2]{\small{Department of Cognitive Modeling, Institute for Cognitive and Brain Sciences, Shahid Beheshti University, G.C. Tehran, Iran}}
	\affil[3]{\small{Department of Statistics and Actuarial Science, University of Waterloo, Waterloo, Canada}}

	\maketitle

	\begin{abstract}

The recent introduction of the Least-Squares Support Vector Regression (LS-SVR) algorithm for solving differential and integral equations has sparked interest. In this study, we expand the application of this algorithm to address systems of differential-algebraic equations (DAEs). Our work presents a novel approach to solving general DAEs in an operator format by establishing connections between the LS-SVR machine learning model, weighted residual methods, and Legendre orthogonal polynomials. To assess the effectiveness of our proposed method, we conduct simulations involving various DAE scenarios, such as nonlinear systems, fractional-order derivatives, integro-differential, and partial DAEs. Finally, we carry out comparisons between our proposed method and currently established state-of-the-art approaches, demonstrating its reliability and effectiveness.
	\end{abstract}

	\keywords{Least-squares support vector regression, Differential-algebraic equations, Fractional derivative, Machine learning methods, Weighted residual methods}

 	\section{Introduction}
 	A system of differential-algebraic equations (DAEs) is a combination of differential equations and
algebraic equations, in which the differential equations are related to the dynamical evolution of the
system, and the algebraic equations are responsible for constraining the solutions that satisfy the
differential and algebraic equations. DAEs serve as essential models for a wide array of physical phenomena. They find applications across various domains such as mechanical systems, electrical circuit simulations, chemical process modeling, dynamic system control, biological simulations, and control systems. Consequently, solving these intricate differential equations has remained a significant challenge for researchers. To address this, a range of techniques including numerical, analytical, and semi-analytical methods have been employed to tackle the complexities inherent in solving DAEs.
Examples of these techniques include Adomian Decomposition Method (ADM) \cite{celik2006solution}, Pade series \cite{ccelik2003numerical}, Runge-Kutta \cite{butcher1998efficient}, variational iteration \cite{soltanian2009application}, homotopy perturbation \cite{soltanian2010solution}, power series method \cite{benhammouda2014analytical}, homotopy analysis method \cite{zurigat2010analytical}, spectral methods \cite{ahmed2020numerical}, and so on. Table \ref{tab_comparison} presents a comparison of properties among spectral, deep learning, and LS-SVR methods.

 In recent years, machine learning schemes have been developed to solve various differential equations, such as ordinary differential equations (ODEs), partial differential equations (PDEs), stochastic differential equations, and more. These innovative approaches try to extract the problem's underlying structure by learning from the given differential equations. The support vector machine technique is one of the most powerful machine learning methods that, according to a set of labeled training examples, tries to increase the generalization of the learned solution. Many scientists and researchers have used this method to solve differential equations. For instance, Hajimohammadi et al. \cite{hajimohammadi2020new} proposed a numerical learning method to solve the general Falkner–Skan model. A least-squares support vector regression scheme has been developed for solving differential and integral equations by Aghaei and Parand \cite{aghaei2023hyperparameter}. Lu et al. \cite{lu2020solving} solved a higher nonlinear ordinary differential equation via a method based on least-squares support vector machines. 
 In the mentioned articles, we see instances of the use of support vector regression to solve different types of differential equations. However, until now, this method \cite{taheri2023bridging} has not been used to solve a system of differential-algebraic equations. According to the noted statements, in this research, we seek to develop this methodology for solving systems of differential-algebraic equations. A significant point of consideration within this paper is that these equations have been solved with fractional derivative orders, the solutions of which are considerably more intricate in comparison to equations involving integer derivatives. Since a system of equations combines both differential and algebraic components, the process of resolving DAE systems inherently entails a higher level of complexity when compared to the task of solving ODE systems. In this paper,  the application of weighted residual methods (WRM) to approximate solutions of differential equations is utilized. A cohesive integration of the collocation approach and the least-squares support vector regression method has been employed to determine the weight coefficients. Notably, within this framework, the adoption of Legendre functions as kernel functions in the context of the least squares support vector regression methodology is a distinctive facet. The proposed methodology has been employed to achieve numerical solutions for various categories of systems of DAEs, including fractional DAEs, integro-differential algebraic equations of fractional order, and partial differential-algebraic equations.

 In brief, the distinguishing features of our work encompass the following points:
\begin{itemize}
    \item Merging the collocation method with the LS-SVR approach.
    \item Utilizing Legendre functions as kernels within the LS-SVR algorithm.
    \item Utilizing LS-SVR for addressing linear and nonlinear DAEs involving fractional derivatives.
    \item Utilizing LS-SVR for addressing Volterra integro-differential algebraic equations of fractional order.
    \item Utilizing LS-SVR for addressing partial DAEs.

\end{itemize}

\begin{table}[htbp]
        \centering
        
\begin{tabular}{|p{0.25\textwidth}|p{0.2\textwidth}|p{0.25\textwidth}|p{0.25\textwidth}|}
\hline 
 Property & Spectral Methods & Deep Learning & LS-SVR \\
\hline 
  Convergence & {\footnotesize Exponential Convergence.} &  {\footnotesize Ensuring convergence to the best solution is not always guaranteed.} & {\footnotesize Exponential Convergence.} \\
\hline 
 Time Complexity & {\footnotesize Solving a linear system of equations.} & {\footnotesize  Usually computationally expensive, especially for large models.} &  {\footnotesize  Solving a positive definite linear system of equations.} \\
\hline 
  Parameters & {\footnotesize  Few parameters.} & {\footnotesize Parameters, including weights and biases, can be numerous} & {\footnotesize  Few parameters. } \\
\hline 
  Scalability& {\footnotesize Calculations become challenging when dealing with differential equations in high dimensions.} & {\footnotesize Scalable for high dimensions and complex domains.}  & {\footnotesize Calculations are superior to spectral methods but fall short of the performance achieved by deep learning.}  \\
 \hline
\end{tabular}
        \caption{Comparison of Methods}
        \end{table}\label{tab_comparison}

  The rest of this paper is organized as follows. In section \ref{sec2}, first, some types of differential-algebraic equations are reviewed, and a brief explanation of fractional derivative calculations is given. Section \ref{sec3} introduces the least-squares support vector machine regression model, following a presentation of orthogonal polynomials and their characteristics. The efficacy of our approach is demonstrated through its implementation in approximating various examples, as detailed in section \ref{sec4}. Finally, a summary of the proposed method is given in section \ref{sec5}.

 	\section{Preliminaries} \label{sec2}
 	This section explains some algebraic-differential equations solved in the numerical examples section. Ultimately, we introduce Caputo’s fractional derivative for differential equations with fractional derivative.
 	\subsection{A system of differential-algebraic equations}

 A system of differential-algebraic equations is a mathematical model where certain variables are described by differential equations, while others are constrained by algebraic equations.

 The general form of this case is:

\begin{align}
 &f(t, X(t),X^{\prime}(t),Y(t))=0,
 \\
&g(t,X(t),Y(t))=0,
\end{align}
    where $t$ is independent variable, $X\in \mathbb{R}^n$ is a vector of differential variables, $X^{\prime}$ is derivative of $X$, $Y\in \mathbb{R}^m$ is the vector of algebraic variables, function $f$ is a set of algebraic equations, and function $g$ describes the algebraic constraints of the system. These equations are known based on the index of the DAE. The index is the highest derivative that the differential variables should be differentiated. The form of the DAE is also classified based on the type of algebraic equations it includes, such as semi-explicit and fully implicit DAEs. In Section \ref{sec4}, we address the resolution of an index-1 semi-explicit DAE \cite{hosseini2006adomian} within Example \ref{ex1}. The general form of these kinds of differential equations is given by:
    \begin{equation}
\begin{cases}
X^{\prime}(t) =f(t,X(t),Y(t))\\
g(t,X(t),Y(t))=0. \\

\end{cases}.
\end{equation}

\subsection{Integro-differential algebraic equations}

Integro-differential algebraic equations (IDAEs) combine differential equations and algebraic equations with integrals. In this system, some equations have integrals of the dependent variables. The general form of this case can be expressed as \cite{zolfaghari2021structural}:
\begin{equation}\label{idae}
    f(t,X^{(v)}(t),Y(t))+\int_{a}^{t}h(t,s,X^{(\leq v)},Y(s))ds=g(t,X(t),Y(t))=0,
\end{equation}
where $t\in [a,t_f]$ is the independent variable and with dependent variable $X$ which their derivatives up to $v$-th order. The equation (\ref{idae}) is made of one section of differential-algebraic
equation, $f(t,X^{(v)}(t),Y(t))$, and one section of integral
algebraic equation, $h(t,s,X^{( v)},Y(s)$.
\begin{itemize}
    \item If $h=0$, the equation (\ref{idae}) will be a system of differential–algebraic equations.
    \item If $f=0$,  the equation (\ref{idae}) will be a integral
algebraic equation.
\item If $v=0$, the equation (\ref{idae}) will be a system of integral–algebraic equations.
\end{itemize}

In section \ref{sec4}, within example \ref{ex2}, a case study focuses on solving a Volterra integro-differential algebraic equation (VI-DAE) with weakly singular kernels of index-1.

\subsection{Partial differential-algebraic equations}

Partial differential-algebraic equations (PDAEs) are a class of mathematical equations that involve one partial differential equation, one ordinary differential equation, and one algebraic equation. Partial differential equations have partial derivatives related to the spatial coordinates. However, PDAEs have derivatives concerning both spatial and time coordinates. This makes PDAEs much more complex than PDEs and requires a different approach. This is a form of linear partial differential-algebraic equations \cite{gunerhan2020analytical}:

\begin{equation}\label{pdae}
    Av_t(t,x)+Bv_{xx}(t,x)+Cv(t,x)=g(t,x),
\end{equation}
where $A,B,C\in \mathbb{R}^{n\times xn}$ are constant matrices that can be singular, $t\in(0,t_e),x\in(-l,l)\subset\mathbb{R}$, and $v,g:[0,t_e]\times[-l,l]\rightarrow\mathbb{R}^n$.
\begin{itemize}
    \item If $A=0$, the equation (\ref{pdae}) will be an ordinary differential equation.
    \item If $B=0$, the equation (\ref{pdae}) will be a differential-Algebraic equation.
\end{itemize}

In section \ref{sec4}, as illustrated in example \ref{ex5}, a partial differential-algebraic equation has been approximated. In the next part DAEs with fractional order will be explained.

\subsection{Fractional differential-algebraic equations}

Fractional differential-algebraic equations (FDAEs) involve fractional derivatives, which can lead to precise and adaptable for specific physical systems to anticipate and regulate their behavior. Section \ref{sec4} covers diverse categories of FDAEs, encompassing linear, nonlinear, and Volterra integro-differential algebraic equations. In general, an FDAE can be written in the following form:
\begin{align}
 &\fract{}{\alpha}{}{}{D} x(t) = f(t, X(t),D^v X(t), Y(t))
\\
 &g(t, X(t), D^{v} X(t), Y(t)) = 0,
\end{align}
where $\fract{}{\alpha}{}{}{D}$ denotes the fractional derivative of order $\alpha$ which may be given in terms of Caputo or Riemann-Liouville fractional derivative. In the next part, a brief explanation of calculating the fractional derivative is provided.

 	\begin{definition}
 	  Consider a continuous function $h$ over the interval $[a,b]$. The Riemann-Liouville fractional derivative of any order $\alpha$ is defined as follows:
 	  \begin{equation}
 	       \fract{}{\alpha}{}{}{D}h(x)=\frac{1}{\Gamma(m-\beta)}\frac{d^m}{dx^m}\int_{a}^{x}(x-t)^{m-\alpha-1}h(t)dt, \quad m-1\leq \alpha \leq m,
 	  \end{equation}
 	here $\Gamma(.)$  represents the Gamma function, symbolized by the subsequent integral:
 	    \begin{equation}
 	  \Gamma(z)=\int_{0}^{\infty} e^{-\tau} \tau^{z-1}d\tau,
    \end{equation}
 that this integral is convergent in the right half of the complex plane $(\text{Re}(z)\geq 0)$.
 	  \end{definition}

          Caputo's \cite{caputo1967linear} formulation of non-integer order differentiation presents a distinctive approach to the concept. A notable advantage of this definition is its ability to express initial conditions of fractional differential equations in a format consistent with those for integer-order differential equations. The following definition offers an alternative approach to formulating the fractional derivative.

 	   \begin{definition}
 	    Caputo's formulation for the fractional order $\alpha$ is presented as:
 	    \begin{equation}\label{caputo}
 	           \fract{C}{\alpha}{}{}{D}h(x)=\frac{1}{\Gamma(v-\alpha)}\int_{a}^{x} \frac{h^{(v)}(t)dt}{(x-t)^{\alpha+1-v}}, \quad v-1<\alpha<v.
 	     \end{equation}
 	   \end{definition}

In Caputo’s derivative formula, the calculation of this derivative requires the calculation of an
integral, which may be complicated and computationally inefficient. Therefore, in the next theorem, a method
to approximate this derivative is introduced.

\begin{theorem}
    Let $0 < \alpha < 1$ and the interval $[x_0, x_m]$ is discretized to $m$ points, $0 = x_0 < x_1 < \dots < x_m$. By
subtracting an integral part in Caputo’s definition to $m$ sub-interval, Caputo’s fractional derivative is
defined as:
\begin{equation} \label{form_des}
    \fract{C}{\alpha}{}{}{D}h(x_m)=\frac{1}{\Gamma(m-\alpha)}\sum_{k=0}^{m-1}\int_{x_k}^{x_{k+1}} \frac{h^{\prime}(t)}{({x_m-t})^{\alpha}}dt.
\end{equation}

And since the first derivative of the function $h(x)$ is estimated as:
\begin{equation}
   h^{\prime}(t)\approx\frac{h_{k+1}-h_k}{f_{k+1}-f_k}, \quad x_k<t<x_{k+1},
   \end{equation}
that $h_k=h(f_k)$, then the formula (\ref{form_des}) can be expressed as:
\begin{equation}
    \fract{C}{\alpha}{}{}{D}h(x_m)\approx \sum_{k=0}^{m}(g_{k-1}-g_k)h_k,
\end{equation}
with $g_k$ defined as:
\begin{center}
 	    $g_k=\begin{cases}\frac{(x_m-x_k)^{1-\alpha}-(x_m-x_{k+1})^{1-\alpha}}{\Gamma(2-\alpha)(x_{k+1}-x_k}), & 1\leq k<m\\0, & otherwise\end{cases}$.
      \end{center}

 This process is known as $L1$ discretization, in which $\alpha$ is the order of the fractional derivative and assumes that the points $\Delta x=x_{k+1}-x_k$ are equidistant.
 \end{theorem}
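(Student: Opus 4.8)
The plan is to obtain the discrete formula by carrying out, in order, three operations on Caputo's integral: partitioning the domain, substituting a finite-difference approximation of $h'$, and regrouping the resulting sum. First I would specialize Caputo's definition \eqref{caputo} to the case $0<\alpha<1$, so that $v=1$ and $\Gamma(v-\alpha)=\Gamma(1-\alpha)$, giving
\[
\fract{C}{\alpha}{}{}{D}h(x_m)=\frac{1}{\Gamma(1-\alpha)}\int_{x_0}^{x_m}\frac{h'(t)}{(x_m-t)^{\alpha}}\,dt.
\]
Additivity of the integral over the partition $x_0<x_1<\dots<x_m$ immediately splits this into the sum of integrals over the sub-intervals $[x_k,x_{k+1}]$ displayed in \eqref{form_des}.

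Next I would insert the piecewise difference quotient $h'(t)\approx (h_{k+1}-h_k)/(x_{k+1}-x_k)$, which is constant on each $[x_k,x_{k+1}]$ and hence factors out of the $k$-th integral. The remaining kernel integral $\int_{x_k}^{x_{k+1}}(x_m-t)^{-\alpha}\,dt$ is elementary: the substitution $u=x_m-t$ turns it into $\int u^{-\alpha}\,du$, evaluating to $\big[(x_m-x_k)^{1-\alpha}-(x_m-x_{k+1})^{1-\alpha}\big]/(1-\alpha)$. Folding the factor $1/(1-\alpha)$ into the gamma term through the functional equation $\Gamma(2-\alpha)=(1-\alpha)\Gamma(1-\alpha)$ produces exactly the coefficients $g_k$ named in the statement, so that the approximation now reads $\sum_{k=0}^{m-1} g_k\,(h_{k+1}-h_k)$.

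The final step converts this sum of increments into a weighted sum of the nodal values $h_k$. I would apply summation by parts: splitting $\sum_k g_k(h_{k+1}-h_k)=\sum_k g_k h_{k+1}-\sum_k g_k h_k$ and re-indexing the first sum by $k\mapsto k-1$, the coefficient multiplying each $h_k$ collapses to $g_{k-1}-g_k$. With the boundary convention $g_{-1}=g_m=0$ (the ``otherwise'' branch in the definition of $g_k$), this yields the claimed closed form $\sum_{k=0}^{m}(g_{k-1}-g_k)h_k$.

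Every computation here is elementary, so the only real care is in the last step's bookkeeping: tracking the index shift and, in particular, verifying that the endpoint contributions $h_0$ and $h_m$ emerge with the correct coefficients $-g_0$ and $g_{m-1}$, which is precisely what forces the vanishing-boundary convention on $g_k$. I expect this accounting — rather than any analytic difficulty — to be the main thing to get right, since both the substitution and the gamma identity are standard.
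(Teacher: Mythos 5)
Your derivation is correct and is the standard route to the L1 formula: split the Caputo integral over the partition, freeze the difference quotient on each sub-interval, evaluate the elementary kernel integral via $u=x_m-t$, absorb $1/(1-\alpha)$ using $\Gamma(2-\alpha)=(1-\alpha)\Gamma(1-\alpha)$, and finish with summation by parts under the convention $g_{-1}=g_m=0$. The paper itself offers no argument here --- its ``proof'' is only a citation to Miller--Ross --- so your write-up actually supplies the missing derivation rather than diverging from one. Two small reconciliations with the statement as printed: the prefactor $\Gamma(m-\alpha)$ there should read $\Gamma(1-\alpha)$ (your specialization $v=1$ is the correct one), and the nonzero branch of $g_k$ must hold for $0\leq k<m$ rather than $1\leq k<m$, since your summation-by-parts bookkeeping requires $g_0\neq 0$ to produce the coefficient $-g_0$ on $h_0$; both are evidently typos in the statement, and your version is the consistent one.
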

\begin{proof}
    \cite{miller1993introduction}
\end{proof}

Fractional calculus has led to the development of different mathematical models, which have better consistency with experimental data in both physical and engineering problems, like fractional differential equations.

  \section{Methodology} \label{sec3}This section proposes a model for solving differential-algebraic equations using the least-squares support vector regression method in conjunction with the collocation approach and the Legendre polynomials.

 \subsection{Legendre Orthogonal Kernel}
 As a weighted residual approach, Spectral methods provide the advantage of choosing basis functions. In spectral methods, basis functions are expected to be infinitely differentiable and possess a completeness property. Commonly, orthogonal functions are employed as the basis, simplifying calculations and enabling the derivation of structured systems.   Within this context, Jacobi polynomials, a family of orthogonal polynomials defined over the interval $[-1,1]$ with respect to the weight function $w(x)$, include the renowned Legendre polynomial sequence. The Legendre polynomials serve as global functions, constituting a comprehensive basis for square-integrable functions on the interval $[-1,1]$. Their utilization leads to spectral convergence, enhancing solution accuracy as the degree of Legendre polynomials rises. Furthermore, explicit Legendre formulas enable efficient computation in numerical methods, making them an ideal choice for solving differential equations.

 	\begin{definition}
  Two polynomials from an infinite sequence of polynomials $\left\{P_n(x)\right\}^\infty_{n=0}$ are considered orthogonal if their inner product, determined for an integrable weight function $w(x)\geq0$ over the interval $[a,b]$, evaluates to zero. Furthermore, Legendre polynomials associated with the weight function $w(x)=1$ within the interval $[-1,1]$ are defined by the formula \cite{foupouagnigni2020orthogonal}:
\begin{equation} \label{leg_delta}
\langle{P_n(x),P_m(x)}\rangle=\frac{2}{2n+1}\delta_{nm},
\end{equation}
\end{definition}
where $\delta$ signifies the Kronecker delta function. The following formula presents the definition of the Legendre polynomial with degree $n$.
 	\begin{definition} These polynomials can be obtained through the expression \cite{shen2011spectral}:
 	\begin{equation}\label{legendr}
 	    P_n(x)=\sum_{v=0}^{\left[\frac{n}{2}\right]}(-1)^v\frac{(2n-2v)!}{2^n(n-v)!(n-2v)!v!}t^{n-2v},
 	\end{equation}
  where $v$ and $n$ are non-negative integers. For enhanced clarity, we calculate Legendre polynomials of degrees zero to three using (\ref{legendr}):
   \begin{align*}
      &P_0(x)=1\\
      &P_1(x)=x\\
      &P_2(x)=\frac{1}{2}(3x^2-1)\\
      &P_3(x)=\frac{1}{2}(5x^3-3x).\\
    \end{align*}
 	\end{definition}

Legendre polynomials can be transferred to arbitrary intervals with appropriate mappings that preserve the orthogonality property. In order to use shifted Legendre polynomials in the interval $[a,b]$, it is necessary to use an affine transformation. For this purpose, the linear mapping
  \begin{equation} \label{shifted}
      \phi(t)=\frac{2t-a-b}{b-a},
  \end{equation}
  that maps the interval $[-1,1]$ to the interval $[a,b]$ is applied.

	\subsection{LS-SVR for solving differential-algebraic equations}
In this section, we are trying to solve a system of differential-algebraic equations by using the least-squares support vector regression model and combining it with weighted residual methods. First, we begin by recalling the LS-SVR model in the context of regression problems.
\begin{theorem}
    Consider the training data $\left\{(x_k,y_k)\right\}^N_{k=1}$, where $x_k\in\mathbb{R}^d$ and $y\in\mathbb{R}$ are assumed. Within the framework of the least-squares support vector machine, the unknown function is formulated as follows:
    \begin{equation}
	    y(x)=\mathbf{w}^T\phi(x)+b,
	\end{equation}
 where $\mathbf{w}$ represents a weight vector, and $b$ corresponds to a bias term. The primal form of the optimization problem is expressed as follows:
 \begin{align*}
	\min\limits_{\mathbf{w}, e, b} \quad& \frac{1}{2} \mathbf{w}^T\mathbf{w} +\gamma \frac{1}{2}\sum_{k=1}^Ne_k^2 \\
	\text{ s.t.} \quad& y_k=\mathbf{w}^T\phi(x)+b+e_k, \quad k=1, \ldots , N,
	\end{align*}
 where, $e_k$ denotes the error variable, and $\gamma$ signifies the regularization parameter. Through the utilization of Lagrange multipliers and optimality conditions, the resultant system of linear equations for solving the regression problem is presented below:
 \begin{align*}
	\renewcommand\arraystretch{1.2}
	\left[
	\begin{array}{c|c}
	0 & 1_N^T \\\hline
	1_N & \Omega + I/\gamma
	\end{array}
	\right]
	\left[
	\begin{array}{c}
	b \\
	\boldsymbol{\alpha}
	\end{array}
	\right] =
	\left[
	\begin{array}{c}
	0 \\
	y
	\end{array}
	\right],
	\end{align*}
 here, $\Omega_{kl}=K(x_k,x_l)$ for all $k,l=1,\dots, N$, and $\boldsymbol{\alpha}=\left[\alpha_1,\alpha_2,\dots,\alpha_N\right]^T$ represents the Lagrange multipliers. Also, we have:
 \begin{equation}
	    y(x)=\sum_{k=1}^N\alpha_kK(x_k,x_l)+b,
	\end{equation}
 which the function $K(x_k,x_l)$ is the kernel function.
\end{theorem}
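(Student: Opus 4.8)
The plan is to derive the stated linear system as the Karush--Kuhn--Tucker (KKT) conditions of the primal problem, following the classical LS-SVM formulation. First I would attach one Lagrange multiplier $\alpha_k$ to each equality constraint $y_k = \mathbf{w}^T\phi(x_k) + b + e_k$ and form the Lagrangian
\begin{equation*}
\mathcal{L}(\mathbf{w}, b, e, \boldsymbol{\alpha}) = \frac{1}{2}\mathbf{w}^T\mathbf{w} + \frac{\gamma}{2}\sum_{k=1}^N e_k^2 - \sum_{k=1}^N \alpha_k\left(\mathbf{w}^T\phi(x_k) + b + e_k - y_k\right).
\end{equation*}
Since the objective is strictly convex in $(\mathbf{w}, e)$ and the constraints are affine, the problem is a convex quadratic program; the linearity of the constraints supplies the constraint qualification, so the stationarity conditions are both necessary and sufficient for the global optimum.

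Next I would set the partial derivatives of $\mathcal{L}$ with respect to each primal variable to zero. Differentiating in $\mathbf{w}$ gives $\mathbf{w} = \sum_{k=1}^N \alpha_k \phi(x_k)$; differentiating in $b$ gives $\sum_{k=1}^N \alpha_k = 0$; differentiating in each $e_k$ gives $\alpha_k = \gamma e_k$; and differentiating in each $\alpha_k$ recovers the constraint itself. The crucial step is then to eliminate the primal unknowns $\mathbf{w}$ and $e_k$ from the constraint equations. Substituting the expression for $\mathbf{w}$ produces the feature inner products $\phi(x_k)^T\phi(x_l)$, which I would replace by the kernel entries $\Omega_{kl} = K(x_k,x_l)$, while substituting $e_k = \alpha_k/\gamma$ introduces the regularization term. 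This collapses the (possibly high- or infinite-dimensional) weight vector into the finite dual variables $\boldsymbol{\alpha}$ and yields the per-sample relation $\sum_{l} \Omega_{kl}\alpha_l + b + \alpha_k/\gamma = y_k$.

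Collecting these relations in matrix form gives $(\Omega + I/\gamma)\boldsymbol{\alpha} + b\,\mathbf{1}_N = y$, which together with the bias condition $\mathbf{1}_N^T\boldsymbol{\alpha} = 0$ assembles exactly into the stated block system. Finally, inserting $\mathbf{w} = \sum_k \alpha_k\phi(x_k)$ into the model $y(x) = \mathbf{w}^T\phi(x) + b$ and applying the kernel identity once more recovers the kernel expansion $y(x) = \sum_{k=1}^N \alpha_k K(x_k,x) + b$. I expect the only genuine subtlety to be the kernel-trick elimination of $\mathbf{w}$ --- making explicit that the feature map enters only through inner products, so that $\phi$ itself is never evaluated --- together with the observation that $\Omega + I/\gamma$ is symmetric positive definite whenever $\gamma > 0$ and $K$ is positive semidefinite, which guarantees that the dual system is uniquely solvable, as anticipated in the comparison of methods.
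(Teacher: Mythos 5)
Your derivation is correct and is precisely the classical KKT argument of Suykens et al., which is all the paper offers for this theorem (its ``proof'' is simply the citation \cite{suykens2002least}), and which the paper itself replays in expanded form for the DAE setting later in Section \ref{sec3}. The only minor point worth tightening is the last remark: positive definiteness of $\Omega + I/\gamma$ gives invertibility of that block, and unique solvability of the full bordered system then follows because the Schur complement $-\mathbf{1}_N^T(\Omega+I/\gamma)^{-1}\mathbf{1}_N$ is strictly negative, hence nonzero.
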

\begin{proof}
    \cite{suykens2002least}
\end{proof}
To solve the system of differential-algebraic equations, a set of $k$ equations is characterized by corresponding unknown functions $f_i$:
\begin{equation}
\mathcal{N}_i(u_1,u_2,\ldots,u_i) = f_i, \quad i=1,2,\ldots,k.
\label{eq:systemfull}
\end{equation}

The approximated solutions of $u_i$ can be expressed as a formulation derived from the research of Suykens et al. \cite{suykens2002least}:
\begin{equation*}
u_i\approx\tilde{u}_i(x) = w_i^T\varphi(x) + b_i=\sum_{j=0}^k w_j \varphi_j(x)+b_j, \quad j=1,2,\ldots,k,
\end{equation*}
where $\varphi_i(x)$ is replaced with Legendre functions of degree $i$, and $w_i$ and $b_i$ are the unknown coefficients.
In the next step, the unknown coefficients are set side by side in a vector form. Thus, we have:
\begin{equation*}
{\mathbf{w}} = [w_{1,1},w_{1,2},\ldots,w_{1,d}, w_{2,1},w_{2,2},\ldots,w_{2,d}, \ldots, w_{k,1},w_{k,2},\ldots,w_{k,d}],
\end{equation*}
as observed within $d$-dimensional vectors. In order to find the unknown coefficients $w_i$ and $b_i$ the optimization problem by using the inner product, which is defined in (\ref{leg_delta}), for solving \eqref{eq:systemfull} can be constructed as:
\begin{equation}
\begin{aligned}
\min\limits_{\mathbf{w}, \mathbf{e}} &\quad \frac{1}{2} \mathbf{w}^T\mathbf{w} + \frac{\gamma}{2}\mathbf{e}^T\mathbf{e} &\\
\text{ s.t.}& \quad \langle\mathcal{N}_i(\tilde{u}_1,\tilde{u}_2,\ldots,\tilde{u}_k) - f_i,\psi_j\rangle =e_{i,j}, & j=1, \ldots, n,
\end{aligned}
\label{eq:lssvrsystem1}
\end{equation}
where $i=1,2,\ldots,k$ shows that $kn$ is the number of training points, $e_{i,j}$ is the residual value, and $\psi_j$ is a set of basis functions in the test space. The matrix $e_{i,j}$ should be vectorized the same as unknown coefficients $\mathbf{w}$. For any linear operator $\mathcal{N}$, denoted by $\mathcal{L}$, the dual form of the optimization problem \eqref{eq:lssvrsystem1} can be derived. Here we obtain the dual form for a system of three equations and three unknown functions. This process can be generalized for an arbitrary number of equations. Suppose the following system of equations is given:
\begin{equation}
\begin{cases}
\mathcal{L}_1(u_1,u_2,u_3) = f_1\\
\mathcal{L}_2(u_1,u_2,u_3) = f_2\\
\mathcal{L}_3(u_1,u_2,u_3) = f_3
\end{cases}.
\end{equation}

By approximating the solutions using,
\begin{equation}\label{eq:solutions}
\begin{aligned}
\tilde{u}_1(x) &= w_1^T\varphi(x) + b_1. \\
\tilde{u}_2(x) &= w_2^T\varphi(x) + b_2. \\
\tilde{u}_3(x) &= w_3^T\varphi(x) + b_3. \\
\end{aligned}
\end{equation}

Note that, for partial differential-algebraic equations, $\tilde{u}_i$ is defined as $\tilde{u}_i(x,t)= w_{ i,j}{\varphi}^T_j(x)\varphi_i(t)$, which applies to the general form we present. The optimization problem \eqref{eq:lssvrsystem1} takes the form
\begin{equation*}
\begin{aligned}
\min\limits_{\mathbf{w},\mathbf{e}} &\quad \frac{1}{2} \mathbf{w}^T\mathbf{w} + \frac{\gamma}{2}\mathbf{e}^T\mathbf{e} &\\
\text{ s.t.}& \quad \langle\mathcal{L}_1(\tilde{u}_1,\tilde{u}_2,\tilde{u}_3) - f_1,\psi_j\rangle =e_{1,j}, & j&=1, \ldots, n, \\
\text{ s.t.}& \quad
\langle\mathcal{L}_2(\tilde{u}_1,\tilde{u}_2,\tilde{u}_3) - f_2,\psi_j\rangle =e_{2,j}, & j&=n+1, \ldots, 2n, \\
\text{ s.t.}& \quad \langle\mathcal{L}_3(\tilde{u}_1,\tilde{u}_2,\tilde{u}_3) - f_3,\psi_j\rangle =e_{3,j}, & j&=2n+1, \ldots, 3n,
\end{aligned}
\end{equation*}
where
\begin{equation}
\begin{aligned}
\mathbf{w} &= [w_1,w_2,w_3] = [w_{1,1}, w_{1,2},\ldots,w_{1,n},w_{2,1}, w_{2,2},\ldots,w_{2,n},w_{3,1},w_{3,2},\ldots,w_{3,n}]\\
\mathbf{e} &= [e_1,e_2,e_3] = [e_{1,1}, e_{1,2},\ldots,e_{1,n},e_{2,1}, e_{2,2},\ldots,e_{2,n},e_{3,1}, e_{3,2},\ldots,e_{3,n}].
\end{aligned}
\end{equation}

In order to solve this system, the Lagrangian function should be constructed as follows:
\begin{equation*}
\begin{aligned}
\mathfrak{L}(\mathbf{w},\mathbf{e},\boldsymbol{\alpha}) &= \frac{1}{2} \mathbf{w}^T\mathbf{w} + \frac{\gamma}{2}\mathbf{e}^T\mathbf{e} - \sum_{j=1}^n \alpha_j \langle\mathcal{L}_1(\tilde{u}_1,\tilde{u}_2,\tilde{u}_3) - f_1,\psi_j\rangle -e_{j} -\sum_{j=1}^n \alpha_{n+j} \langle\mathcal{L}_2(\tilde{u}_1,\tilde{u}_2,\tilde{u}_3) - f_2,\psi_j\rangle -e_{n+j} \\ &-\sum_{j=1}^n \alpha_{2n+j} \langle\mathcal{L}_3(\tilde{u}_1,\tilde{u}_2,\tilde{u}_3) - f_3,\psi_j\rangle -e_{2n+j}.
\end{aligned}
\end{equation*}

The conditions for optimality of the Lagrangian function are partial derivatives $\frac{\partial \mathfrak{L}}{\partial w_k}, \frac{\partial \mathfrak{L}}{\partial e_k}, \frac{\partial \mathfrak{L}}{\partial b}$, and $\frac{\partial \mathfrak{L}}{\partial \alpha_k}$ which should be equaled zero. Then we compute these equations to find the coefficients:

\begin{equation}
\begin{aligned}
\frac{\partial \mathfrak{L}}{\partial w_k} = 0\rightarrow w_k =
\begin{cases}
\displaystyle \sum_{j=1}^n \alpha_j \langle\mathcal{L}_1(\varphi_k,0,0) - f_1,\psi_j\rangle -e_{j} + \sum_{j=1}^n \alpha_{n+j} \langle\mathcal{L}_2(\varphi_k,0,0) - f_2,\psi_j\rangle -e_{n+j} + \\ \displaystyle \sum_{j=1}^n \alpha_{2n+j} \langle\mathcal{L}_3(\varphi_k,0,0) - f_3,\psi_j\rangle -e_{2n+j}, & k=1,2,\ldots,d. \\
\displaystyle \sum_{j=1}^n \alpha_j \langle\mathcal{L}_1(0,\varphi_k,0) - f_1,\psi_j\rangle -e_{j} + \sum_{j=1}^n \alpha_{n+j} \langle\mathcal{L}_2(0,\varphi_k,0) - f_2,\psi_j\rangle -e_{n+j} + \\ \displaystyle \sum_{j=1}^n \alpha_{2n+j} \langle\mathcal{L}_3(0,\varphi_k,0) - f_3,\psi_j\rangle -e_{2n+j}, & k=1,2,\ldots,d. \\
\displaystyle \sum_{j=1}^n \alpha_j \langle\mathcal{L}_1(0,0,\varphi_k) - f_1,\psi_j\rangle -e_{j} + \sum_{j=1}^n \alpha_{n+j} \langle\mathcal{L}_2(0,0,\varphi_k) - f_2,\psi_j\rangle -e_{n+j} + \\ \displaystyle \sum_{j=1}^n \alpha_{2n+j} \langle\mathcal{L}_3(0,0,\varphi_k) - f_3,\psi_j\rangle -e_{2n+j}, & k=1,2,\ldots,d. \\
\end{cases}\\
\end{aligned}
\end{equation}
\begin{equation}
\begin{aligned}
\frac{\partial \mathfrak{L}}{\partial e_k} &= 0 \rightarrow \gamma e_k + \alpha_k = 0, \qquad k=1\ldots 3n.\\
\frac{\partial \mathfrak{L}}{\partial b} &= 0\rightarrow
\begin{cases}
\displaystyle \frac{\partial \mathfrak{L}}{\partial b_1} &= 0 \rightarrow \displaystyle \sum_{i=1}^n \alpha_i\langle\mathcal{L}_1(1,0,0),\psi_i\rangle + \sum_{i=1}^n \alpha_{n+i}\langle\mathcal{L}_2(1,0,0),\psi_i\rangle,
\sum_{i=1}^n \alpha_{2n+i}\langle\mathcal{L}_3(1,0,0),\psi_i\rangle,\\
\displaystyle \frac{\partial \mathfrak{L}}{\partial b_2} &= 0 \rightarrow \displaystyle \sum_{i=1}^n \alpha_i\langle\mathcal{L}_1(0,1,0),\psi_i\rangle + \sum_{i=1}^n \alpha_{n+i}\langle\mathcal{L}_2(0,1,0),\psi_i\rangle,
\sum_{i=1}^n \alpha_{2n+i}\langle\mathcal{L}_3(0,1,0),\psi_i\rangle,\\\displaystyle \frac{\partial \mathfrak{L}}{\partial b_3} &= 0 \rightarrow \displaystyle \sum_{i=1}^n \alpha_i\langle\mathcal{L}_1(0,0,1),\psi_i\rangle + \sum_{i=1}^n \alpha_{n+i}\langle\mathcal{L}_2(0,0,1),\psi_i\rangle,
\sum_{i=1}^n \alpha_{2n+i}\langle\mathcal{L}_3(0,0,1),\psi_i\rangle,\\
\end{cases}\\
\frac{\partial \mathfrak{L}}{\partial \alpha_k} &= 0\rightarrow
\begin{cases}
\displaystyle \sum_{j=1}^d w_j \langle\mathcal{L}_1(\varphi_j,0,0) - f_1,\psi_k\rangle + \sum_{j=1}^d w_{d+j} \langle\mathcal{L}_1(0,\varphi_j,0) - f_1,\psi_k\rangle + \\
\displaystyle \sum_{j=1}^d w_{2d+j} \langle\mathcal{L}_1(0,0,\varphi_j) - f_1,\psi_k\rangle =e_{k} & k=1,2,\ldots,n.  \\

\displaystyle \sum_{j=1}^d w_j \langle\mathcal{L}_2(\varphi_j,0,0) - f_2,\psi_k\rangle + \sum_{j=1}^d w_{d+j} \langle\mathcal{L}_2(0,\varphi_j,0) - f_2,\psi_k\rangle + \\
\displaystyle \sum_{j=1}^d w_{2d+j} \langle\mathcal{L}_2(0,0,\varphi_j) - f_2,\psi_k\rangle =e_{k} & k=n+1,n+2,\ldots,2n.  \\

\displaystyle \sum_{j=1}^d w_j \langle\mathcal{L}_3(\varphi_j,0,0) - f_3,\psi_k\rangle + \sum_{j=1}^d w_{d+j} \langle\mathcal{L}_3(0,\varphi_j,0) - f_3,\psi_k\rangle + \\
\displaystyle \sum_{j=1}^d w_{2d+j} \langle\mathcal{L}_3(0,0,\varphi_j) - f_3,\psi_k\rangle =e_{k} & k=2n+1,2n+2,\ldots,3n.  \\
\end{cases}\\
\end{aligned}
\label{eq:optsystem}
\end{equation}

By defining
\begin{equation*}
\begin{aligned}
A^{(1)}_{i,j,k} & = \langle\mathcal{L}_1(\varphi_i,0,0),\psi_j\rangle,
A^{(2)}_{i,j,k}= \langle\mathcal{L}_2(\varphi_i,0,0),\psi_j\rangle,
A^{(3)}_{i,j,k} = \langle\mathcal{L}_3(\varphi_i,0,0),\psi_j\rangle,
\\
A^{(4)}_{i,j,k} & = \langle\mathcal{L}_1(0,\varphi_i,0),\psi_j\rangle,
A^{(5)}_{i,j,k} = \langle\mathcal{L}_2(0,\varphi_i,0),\psi_j\rangle,
A^{(6)}_{i,j,k} = \langle\mathcal{L}_3(0,\varphi_i,0),\psi_j\rangle,
\\
A^{(7)}_{i,j,k}& = \langle\mathcal{L}_1(0,0,\varphi_i),\psi_j\rangle,
A^{(8)}_{i,j,k} = \langle\mathcal{L}_2(0,0,\varphi_i),\psi_j\rangle,
A^{(9)}_{i,j,k} = \langle\mathcal{L}_3(0,0,\varphi_i),\psi_j\rangle,
\\
B^{(1)}_{i,j,k}& = \langle\mathcal{L}_1(1,0,0),\psi_j\rangle,
B^{(2)}_{i,j,k} = \langle\mathcal{L}_2(1,0,0),\psi_j\rangle,
B^{(3)}_{i,j,k} = \langle\mathcal{L}_3(1,0,0),\psi_j\rangle,
\\
B^{(4)}_{i,j,k}& = \langle\mathcal{L}_1(0,1,0),\psi_j\rangle,
B^{(5)}_{i,j,k} = \langle\mathcal{L}_2(0,1,0),\psi_j\rangle,
B^{(6)}_{i,j,k} = \langle\mathcal{L}_3(0,1,0),\psi_j\rangle,
\\
B^{(7)}_{i,j,k}& = \langle\mathcal{L}_1(0,0,1),\psi_j\rangle,
B^{(8)}_{i,j,k} = \langle\mathcal{L}_2(0,0,1),\psi_j\rangle,
B^{(9)}_{i,j,k} = \langle\mathcal{L}_3(0,0,1),\psi_j\rangle,
\\
\end{aligned}
\end{equation*}
and
\begin{equation*}
\begin{aligned}
Z &=\begin{bmatrix}A^{(1)} & A^{(2)} & A^{(3)} \\A^{(4)} & A^{(5)} & A^{(6)} \\A^{(7)} & A^{(8)} & A^{(9)}\end{bmatrix},\\
V &=\begin{bmatrix}B^{(1)} & B^{(2)} & B^{(3)} \\B^{(4)} & B^{(5)} & B^{(6)} \\B^{(7)} & B^{(8)} & B^{(9)}\end{bmatrix},\\
\end{aligned}
\end{equation*}
the relation \eqref{eq:optsystem} can be reformulated as:
\begin{equation*}
\begin{aligned}
\begin{cases}

Z \boldsymbol{\alpha}= \mathbf{w}\\

\mathbf{e} = -\boldsymbol{\alpha}/\gamma\\

b=V^T\boldsymbol{\alpha}\\

Z^T \mathbf{w} - \mathbf{e} = y.
\end{cases}
\end{aligned}
\end{equation*}

Eliminating $\mathbf{w}$ and $\mathbf{e}$ yields
\begin{equation}
\begin{aligned}
\renewcommand\arraystretch{1.5}
\left[
\begin{array}{c|c}
\mathbf{0} & V^T \\\hline
V & \Omega + I/\gamma
\end{array}
\right]
\left[
\begin{array}{c}
b \\ \hline
\boldsymbol{\alpha}
\end{array}
\right] =
\left[
\begin{array}{c}
0 \\\hline
y
\end{array}
\right],
\end{aligned}
\end{equation}
where
\begin{equation*}
\begin{aligned}
\boldsymbol{\alpha} &= \left[\alpha_1,\ldots,\alpha_{3n}\right]^T,\\
y& = \left[\langle f_1, \psi_1 \rangle,\langle f_1, \psi_2 \rangle,\ldots,\langle f_1, \psi_n \rangle, \langle f_2, \psi_1 \rangle,\langle f_2, \psi_2 \rangle,\ldots,\langle f_2, \psi_n \rangle,
\langle f_3, \psi_1 \rangle,\langle f_3, \psi_2 \rangle,\ldots,\langle f_3, \psi_n \rangle \right]^T,\\
\end{aligned}
\end{equation*}
and
\begin{equation}
\Omega = Z^T Z = \begin{bmatrix}A^T & C^T \\B^T & D^T \end{bmatrix}
\begin{bmatrix}A & B \\C & D \end{bmatrix}.
\end{equation}

The kernel trick also appears at each block of matrix $\Omega$. As a result, the approximated solution in the dual form can be computed using:
\begin{equation*}
\begin{aligned}
\tilde{u}_1 (x) &= \sum_{i=1}^n \alpha_i
\widetilde{K}_1(x,x_i) + \sum_{i=1}^n \alpha_i
\widetilde{K}_2(x,x_i) + \sum_{i=1}^n \alpha_i
\widetilde{K}_3(x,x_i) +b_1, \\
\tilde{u}_2 (x) &= \sum_{i=1}^n \alpha_i
\widetilde{K}_4(x,x_i) + \sum_{i=1}^n \alpha_i
\widetilde{K}_5(x,x_i) \sum_{i=1}^n \alpha_i
\widetilde{K}_6(x,x_i)+ b_2,\\
\tilde{u}_3 (x) &= \sum_{i=1}^n \alpha_i
\widetilde{K}_7(x,x_i) + \sum_{i=1}^n \alpha_i
\widetilde{K}_8(x,x_i) \sum_{i=1}^n \alpha_i
\widetilde{K}_9(x,x_i) +b_3,
\end{aligned}
\end{equation*}
where
\begin{equation*}
\begin{aligned}
\widetilde{K}_1(x,x_i) = \langle \langle \mathcal{L}_1(\varphi,0,0), \psi_i \rangle , \varphi\rangle = \langle\mathcal{L}_1(K(x,t),0,0),\psi_i\rangle,\\
\widetilde{K}_2(x,x_i) = \langle \langle \mathcal{L}_2(\varphi,0,0), \psi_i \rangle , \varphi\rangle= \langle\mathcal{L}_2(K(x,t),0,0),\psi_i\rangle,\\
\widetilde{K}_3(x,x_i) = \langle \langle \mathcal{L}_3(\varphi,0,0), \psi_i \rangle , \varphi\rangle = \langle\mathcal{L}_3(K(x,t),0,0),\psi_i\rangle,\\
\widetilde{K}_4(x,x_i) = \langle \langle \mathcal{L}_1(0,\varphi,0), \psi_i \rangle , \varphi\rangle = \langle\mathcal{L}_1(0,K(x,t),0),\psi_i\rangle,\\
\widetilde{K}_5(x,x_i) = \langle \langle \mathcal{L}_2(0,\varphi,0), \psi_i \rangle , \varphi\rangle= \langle\mathcal{L}_2(0,K(x,t),0),\psi_i\rangle,\\
\widetilde{K}_6(x,x_i) = \langle \langle \mathcal{L}_3(0,\varphi,0), \psi_i \rangle , \varphi\rangle = \langle\mathcal{L}_3(0,K(x,t),0),\psi_i\rangle,\\
\widetilde{K}_7(x,x_i) = \langle \langle \mathcal{L}_1(0,0,\varphi), \psi_i \rangle , \varphi\rangle = \langle\mathcal{L}_1(0,0,K(x,t)),\psi_i\rangle,\\
\widetilde{K}_8(x,x_i) = \langle \langle \mathcal{L}_2(0,0,\varphi), \psi_i \rangle , \varphi\rangle= \langle\mathcal{L}_2(0,0,K(x,t)),\psi_i\rangle,\\
\widetilde{K}_9(x,x_i) = \langle \langle \mathcal{L}_3(0,0,\varphi), \psi_i \rangle , \varphi\rangle = \langle\mathcal{L}_3(0,0,K(x,t)),\psi_i\rangle.\\
\end{aligned}
\end{equation*}

 Hence, the proposed approach formulates the solutions (\ref{eq:solutions}) using kernel functions $(K(x,t))$ which are replaced by Legendre polynomials \cite{parand2021new}. Also, it would be possible to employ $\boldsymbol{\alpha}$ to calculate $\mathbf{w}$, and by utilizing Formula (\ref{eq:solutions}), one could approximate the solution.

	\begin{remark}

	  Although the choice of basis functions can differ for each approximated function, for simplicity, uniform functions are employed. In this proposed methodology, Legendre polynomials (\ref{legendr}) of degree one, denoted as $P_1(x)=x$, are utilized. Additionally, considering that $P_0(x) = 1$, the variable $b$ (bias term) is omitted from this formulation.
\end{remark}
\begin{remark}
    Since the Legendre function is defined in the interval $(-1,1)$ in the formula (\ref{leg_delta}), we built the kernel of the network by the shifted Legendre polynomials using (\ref{shifted}).
\end{remark}

\begin{remark}
    The collocation method considers the test function as $\psi_j(x)=\delta(x-x_j)$, where $\delta$ is the Dirac delta function. According to the characteristic of Dirac’s Delta function, in the formula (\ref{eq:lssvrsystem1}) we have $\langle\mathcal{N}_i(\tilde{u}_1,\tilde{u}_2,\ldots,\tilde{u}_k) - f_i,\delta(x-x_j)\rangle =\mathcal{N}_i(\tilde{u}_1(x_j),\tilde{u}_2(x_j),\ldots,\tilde{u}_k(x_j)) - f_i(x_j)$.
\end{remark}
This methodology is termed Collocation Least-Squares Support Vector Regression (CLS-SVR). In this algorithm, we employ the roots of Legendre polynomials of degree $m$ as the training data for the CLS-SVR learning process.

	\section{Numerical results} \label{sec4}
	To show the efficiency of the proposed method, this section presents a numerical solution to various well-known differential-algebraic equations. This example includes DAEs, VI-DAE of fractional order, linear and nonlinear FDAEs, and a system of partial differential-algebraic equations. All the results have been obtained by running the algorithm using Maple 20. The error criterion of the presented tables is the relative error defined by:
 \begin{equation}
 \mathbb{E}_u=|\frac{u(t)-\tilde{u}(t)}{u(t)}|,
 \end{equation}
  where $u(t)$ is the exact solution and $\tilde{u}(t)$ is the predicted answer of the algorithm. In all these examples, the value of the regularization coefficient $\gamma$ is chosen between $10$ and $10^3$. These differences are caused by the structure of the problem and the limitations of Maple. In addition to the numerical tables and relative error for each example, the graph of the absolute error is given.

	\begin{example}\label{ex1}
	As the first example, consider the following differential-algebraic equation
		\begin{figure}\label{12}
		\centering
		\begin{subfigure}{.32\textwidth}
			\centering
\includegraphics[width=1\linewidth]{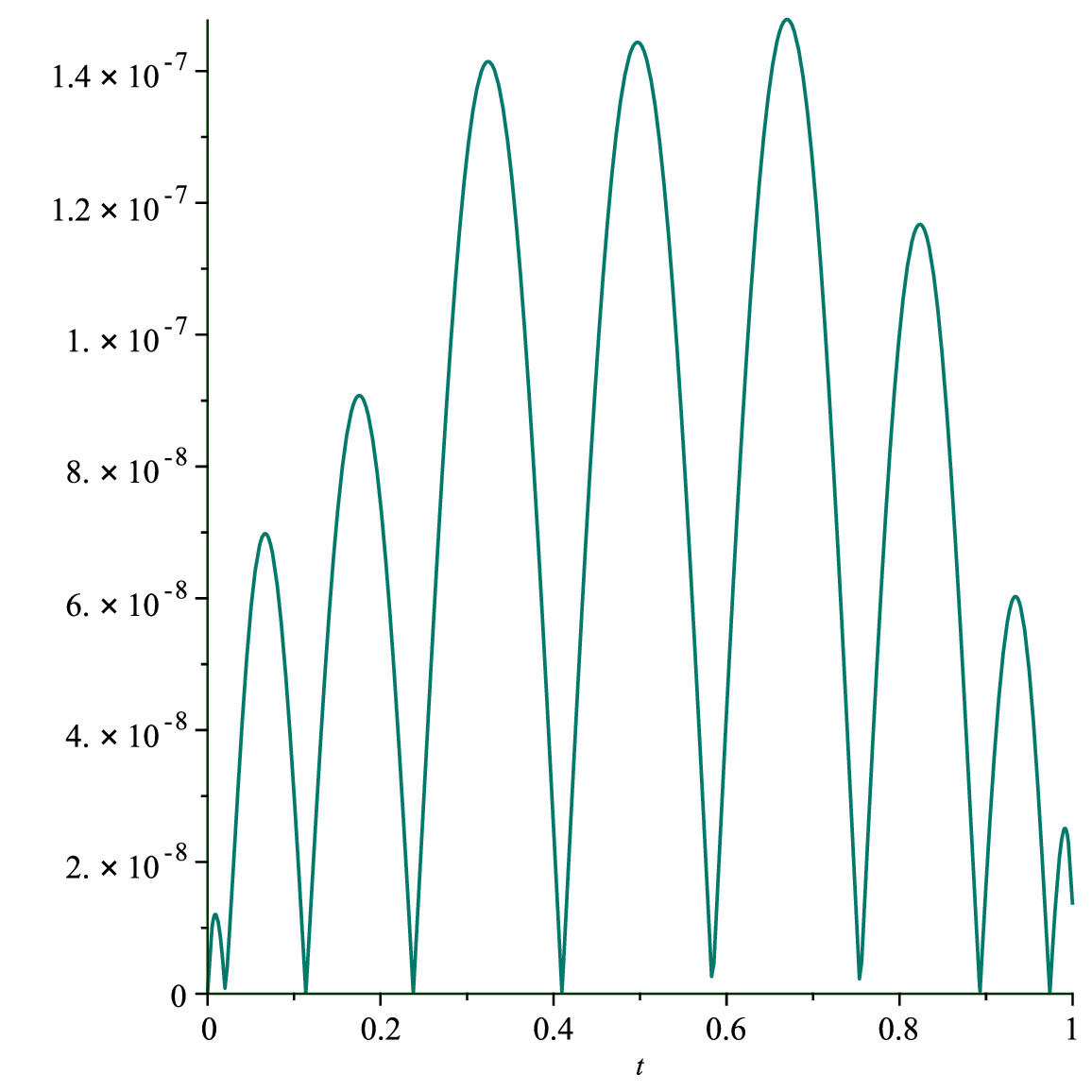}
			\caption{}
			\label{img:1}
		\end{subfigure}
		\begin{subfigure}{.32\textwidth}
			\centering
			\includegraphics[width=1\linewidth]{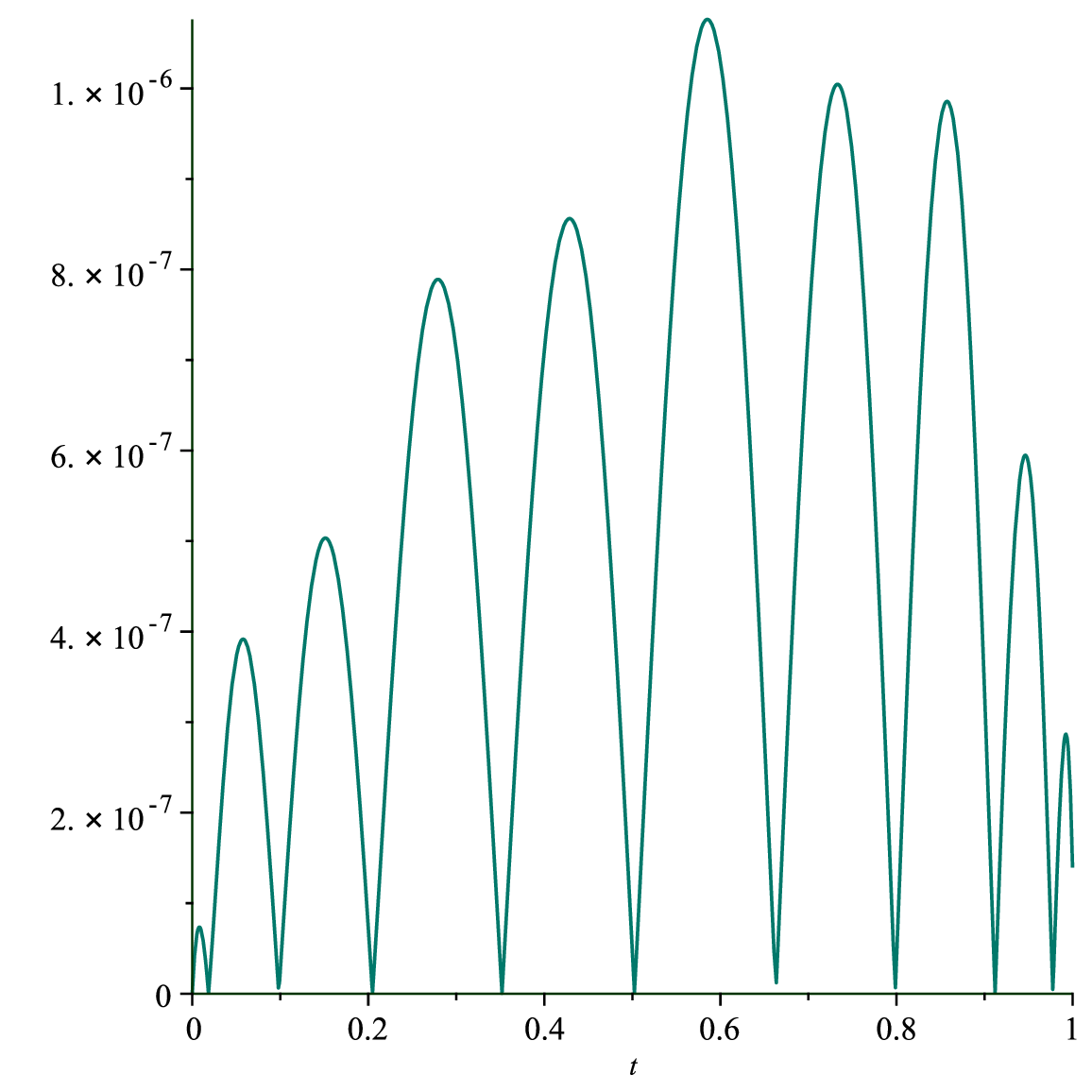}
			\caption{}
			\label{img:2}
		\end{subfigure}
   \begin{subfigure}
        {.32\textwidth}
			\centering
			\includegraphics[width=1\linewidth]{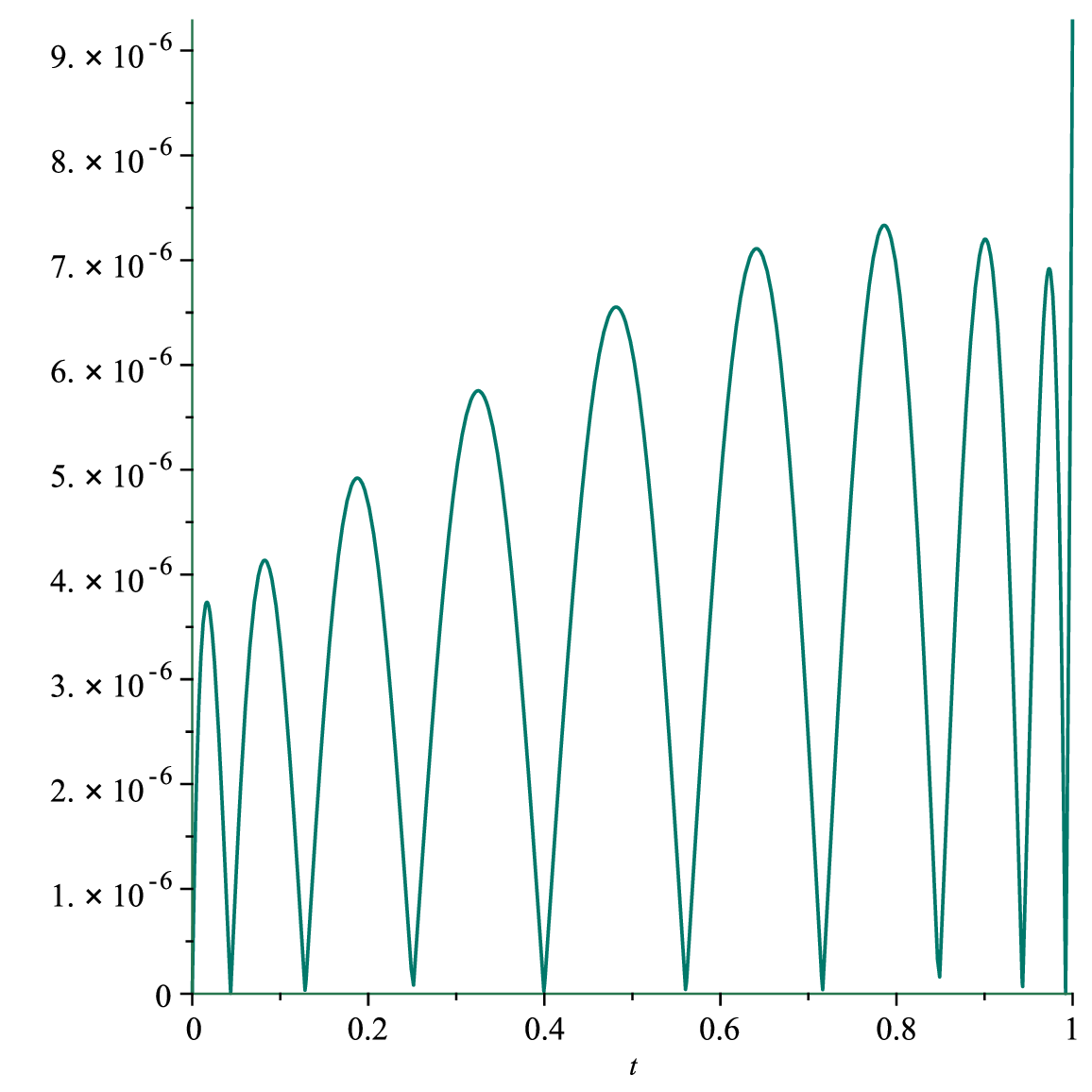}
			\caption{}

		\end{subfigure}
		\caption{(a) $|u_1(t)-\tilde{u}_1(t)|$, (b) $|u_2(t)-\tilde{u}_2(t)|$, and (c) $|u_3(t)-\tilde{u}_3(t)|$ for example \ref{ex1} with $m=4$.}
	\end{figure}\label{fig1}

\begin{table*}[]
\centering
 \captionsetup{font=scriptsize}
\caption{Exact, approximate solution, and relative error for $m=10$ on $\left[0,1\right]$ for example \ref{ex1}}.
\small
\resizebox{0.9\textwidth}{!}{
 \begin{tabular}
{|p{0.40\linewidth}|p{0.35\linewidth}|p{0.33\linewidth}|}

\begin{tabular}{@{}llll@{}}
\toprule
$t$ & $u_1(t)$ & $\Tilde{u}_1(t)$ & $\mathbb{E}_{u_1(t)}$\\ \midrule

{0.2} & 0.0397338& 0.0397337& $1.8 \times10^{-6}$\\
{0.4} & 0.15576733 & 0.15576736 & $1.6 \times10^{-7}$  \\
{0.6} & 0.3387854 & 0.3387855 & $1.2 \times10^{-7}$\\
{0.8} & 0.5738848 & 0.5738847 & $1.7 \times10^{-7}$ \\
{1} & 0.84147098 & 0.84147097  & $1.6 \times10^{-8}$ \\
\bottomrule
\end{tabular}

&

\begin{tabular}{@{}lll@{}}
\toprule
 $u_2(t)$ & $\Tilde{u}_2(t)$ & $\mathbb{E}_{u_2(t)}$\\ \midrule
 0.20271 &0.20270 & $3.5 \times10^{-7}$\\
 0.422793 & 0.422792 & $1.6 \times10^{-6}$ \\
0.684136 & 0.684137 & $1.5 \times10^{-6}$   \\
1.02963855 & 1.02963859 & $3.6 \times10^{-8}$ \\
 1.5574077 & 1.5574078 & $8.9 \times10^{-8}$  \\
\bottomrule
\end{tabular}
&
\begin{tabular}{@{}lll@{}}
\toprule
 $u_3(t)$ & $\Tilde{u}_3(t)$ & $\mathbb{E}_{u_3(t)}$\\ \midrule
 0.196013 & 0.196017 & $2.3 \times10^{-5}$ \\
0.3684243 & 0.3684244 &$1.2 \times10^{-7}$ \\
 0.495201& 0.495196 & $9.8 \times10^{-6}$    \\
 0.55736 & 0.55737 & $1.2 \times10^{-5}$ \\
0.540302 & 0.540293 & $1.7\times10^{-5}$ \\
\bottomrule
\end{tabular} \\
\hline
\end{tabular}}
\end{table*}\label{tab1}

\begin{equation}
\begin{cases}

u^{\prime}_1(t)=u_1(t)-u_3(t)u_2(t)+sin(t)+tcos(t),
\\

u^{\prime}_2(t)=tu_3(t)+u^2_1(t)+sec^2(t)-t^2(cos(t)+sin^2(t)),
\\

0=u_1(t)-u_3(t)+t(cos(t)-sin(t)), \quad t\in [0,1]
\\
\end{cases}
\end{equation}
	with initial conditions $u_1(0)=u_2(0)=u_3(0)=0$ and the exact solutions as $u_1(t)=tsin(t), u_2(t)=tan(t)$ and $u_3(t)=tcos(t)$. For this case, we set $m=10$ and $\gamma=10^2$ over the interval $\left[0,1\right]$. The relative error value obtained via the CLS-SVR method for each $u_i$ is denoted as $\mathbb{E}_{u_i}$.
  In Table \ref{tab1}, the error of this case is presented. In Figure \ref{fig1}, the absolute error of the $u_1(t)$, $u_2(t)$, and $u_3(t)$ is plotted.
\end{example}

\begin{example}\label{ex2}
In this example, we consider a fractional order
integro-differential algebraic equation :
	\begin{equation}
\begin{cases}

D^{\frac{1}{2}}u_1(t)=+t\int_{0}^{t} u_1(s)ds+(1+t)\int_{0}^{t}u_2(s)ds +(\frac{3t\sqrt{\pi}}{4}-\frac{2t^{\frac{7}{2}}}{5}-\frac{2(1+t)t^{\frac{5}{2}}}{5}) ,
\\
0=\int_{0}^{t} (1+s)u_1(s)ds +\int_{0}^{t} u_2(s)ds + (\frac{2t^{\frac{5}{2}}(5t+7)}{35}+\frac{2t^{\frac{5}{2}}}{5}), \quad t\in[0,1],
\end{cases}
\end{equation}
with zero initial conditions and the exact solutions $u_1(t) = u_2(t) = t\sqrt{t}$.

\begin{table*}[]
\centering
 \captionsetup{font=scriptsize}
\caption{Exact, approximate solution, and relative error for $m=14$ on $\left[0,1\right]$ for example \ref{ex2}}.
\small
 \begin{tabular}
{|p{0.39\linewidth}|p{0.30\linewidth}|}

\begin{tabular}{@{}llll@{}}
\toprule
$t$ & $u_1(t)$ & $\Tilde{u}_1(t)$ & $\mathbb{E}_{u_1(t)}$\\ \midrule

{0.2} & 0.089442 & 0.089452& $1.1 \times10^{-4}$  \\
{0.4} & 0.252982 & 0.252989 & $2.8 \times10^{-5}$   \\
{0.6} & 0.464758 & 0.464760 & $5.5 \times10^{-6}$    \\
{0.8} & 0.715541 & 0.715542& $1.5 \times10^{-6}$  \\
{1} & 1 & 1.0000007        & $7.3 \times10^{-7}$  \\
\bottomrule
\end{tabular}

&

\begin{tabular}{@{}lll@{}}
\toprule
 $u_2(t)$ & $\Tilde{u}_2(t)$ & $\mathbb{E}_{u_2(t)}$\\ \midrule
   0.089442 & 0.089690 & $2.7 \times10^{-3}$ \\
 0.252982 & 0.253186 & $8.0 \times10^{-4}$  \\
0.464758 & 0.465094 & $7.3 \times10^{-4}$   \\
 0.715541 & 0.715935& $5.5 \times10^{-4}$ \\
 1 & 1.01319  & $1.3 \times10^{-2}$\\
\bottomrule
\end{tabular}
\\
\hline
\end{tabular}
\end{table*}
\label{tab2}

	\begin{figure}
		\centering
		\begin{subfigure}{.4\textwidth}
			\centering			\includegraphics[width=1\linewidth]{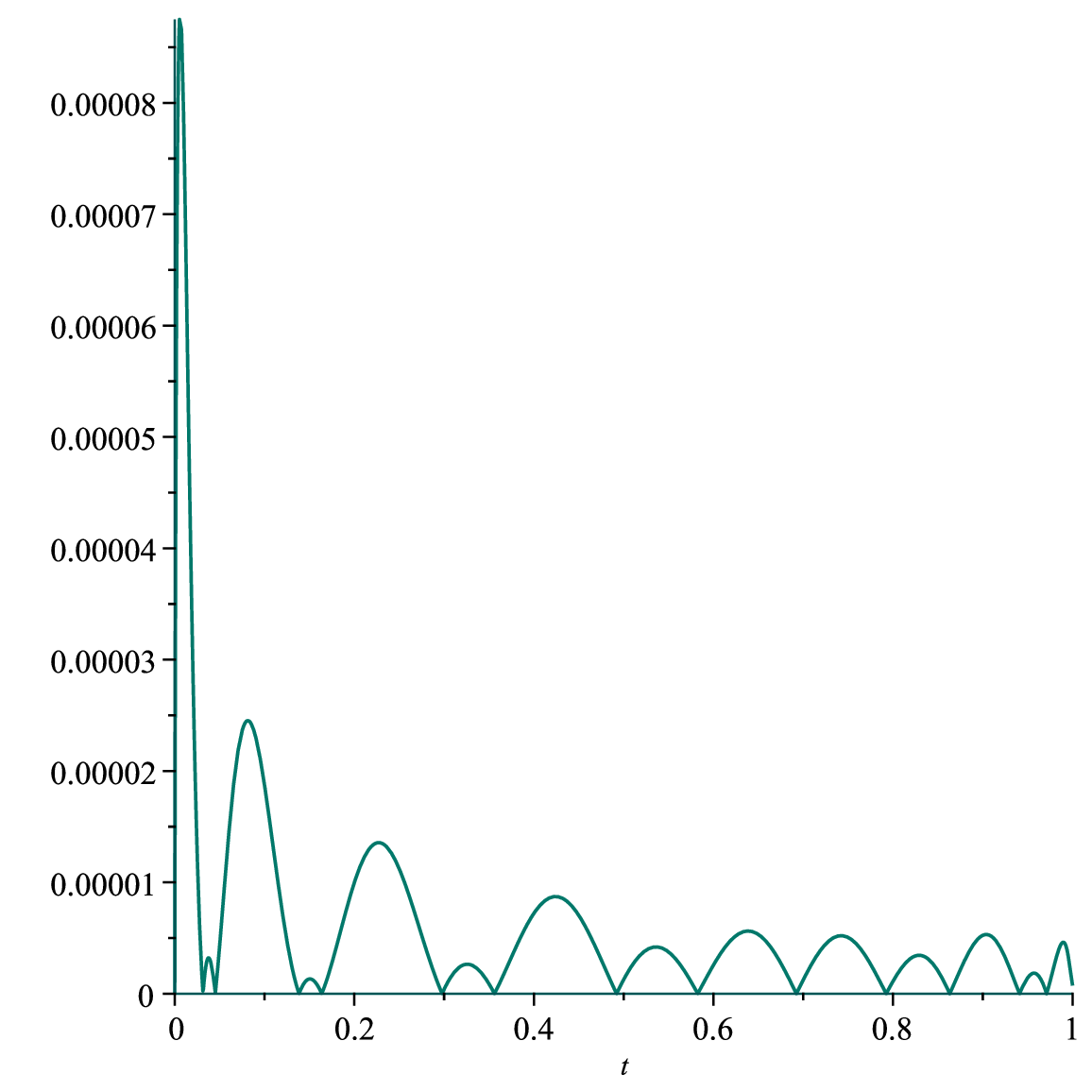}
			\caption{}

		\end{subfigure}
		\begin{subfigure}{.4\textwidth}
			\centering
			\includegraphics[width=1\linewidth]{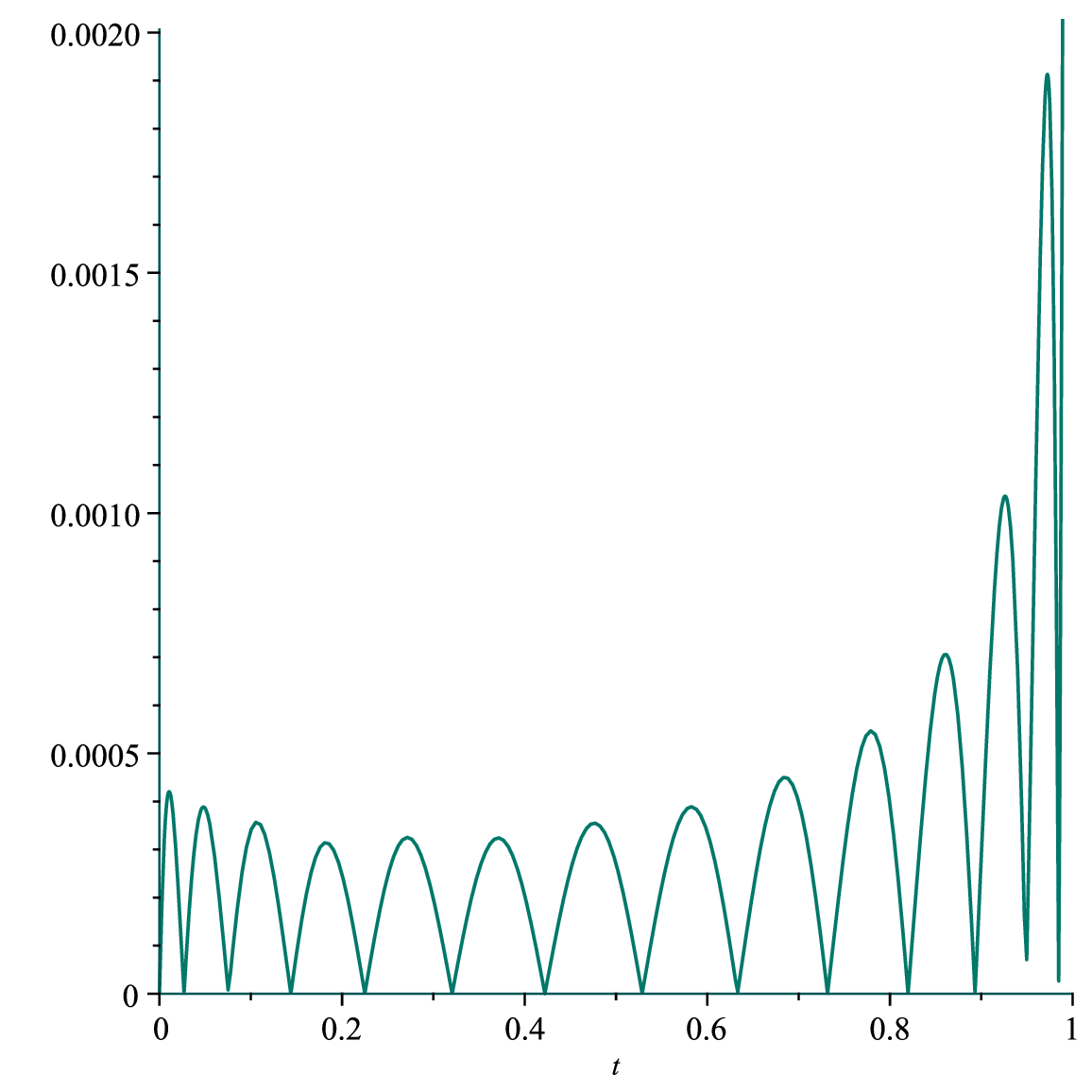}
			\caption{}

		\end{subfigure}
		\caption{(a) $|u_1(t)-\tilde{u}_1(t)|$ and (b) $|u_2(t)-\tilde{u}_2(t)|$ for example \ref{ex2} with $m=14$.}\label{fig2}
	\end{figure}

The provided equations belong to the category of Volterra integro-differential algebraic equations of fractional order. This study employs the Caputo derivative (\ref{caputo}) to account for fractional derivatives in the differential equations. To address this equation, the proposed model is trained using parameters $m=14$ and $\gamma=10^3$ across the interval $[0,1]$. Details of the error analysis resulting from the program's execution are provided in Table \ref{tab2} and Figure \ref{fig2}.
 \end{example}

    \begin{example}\label{ex3}
       In the third problem, we calculate the solution of a linear fractional differential-algebraic equation

\begin{equation}
\begin{cases}

D^{\frac{1}{2}}u_1(t)+2u_1(t)-\frac{\Gamma(\frac{7}{2})}{\Gamma(3)}u_2(t)+u_3(t)=2t^{\frac{5}{2}}+sin(t),
\\

D^{\frac{1}{2}}u_2(t)+u_2(t)+u_3(t)=\frac{\Gamma(3)}{\Gamma(\frac{5}{2})}t^{\frac{3}{2}}+t^2+sin(t),
\\

2u_1(t)+u_2(t)-u_3(t)=2t^{\frac{5}{2}}+t^2-sin(t), \quad t\in[0,1]
\\
\end{cases}
\end{equation}
in the context of zero initial conditions. The corresponding exact solutions are given by $u_1(t) = t^{\frac{5}{2}}$, $u_2(t) = t^2$, and $u_3(t) = \sin(t)$.
\end{example}

      \begin{figure}
		\centering
		\begin{subfigure}{.32\textwidth}
			\centering			\includegraphics[width=1\linewidth]{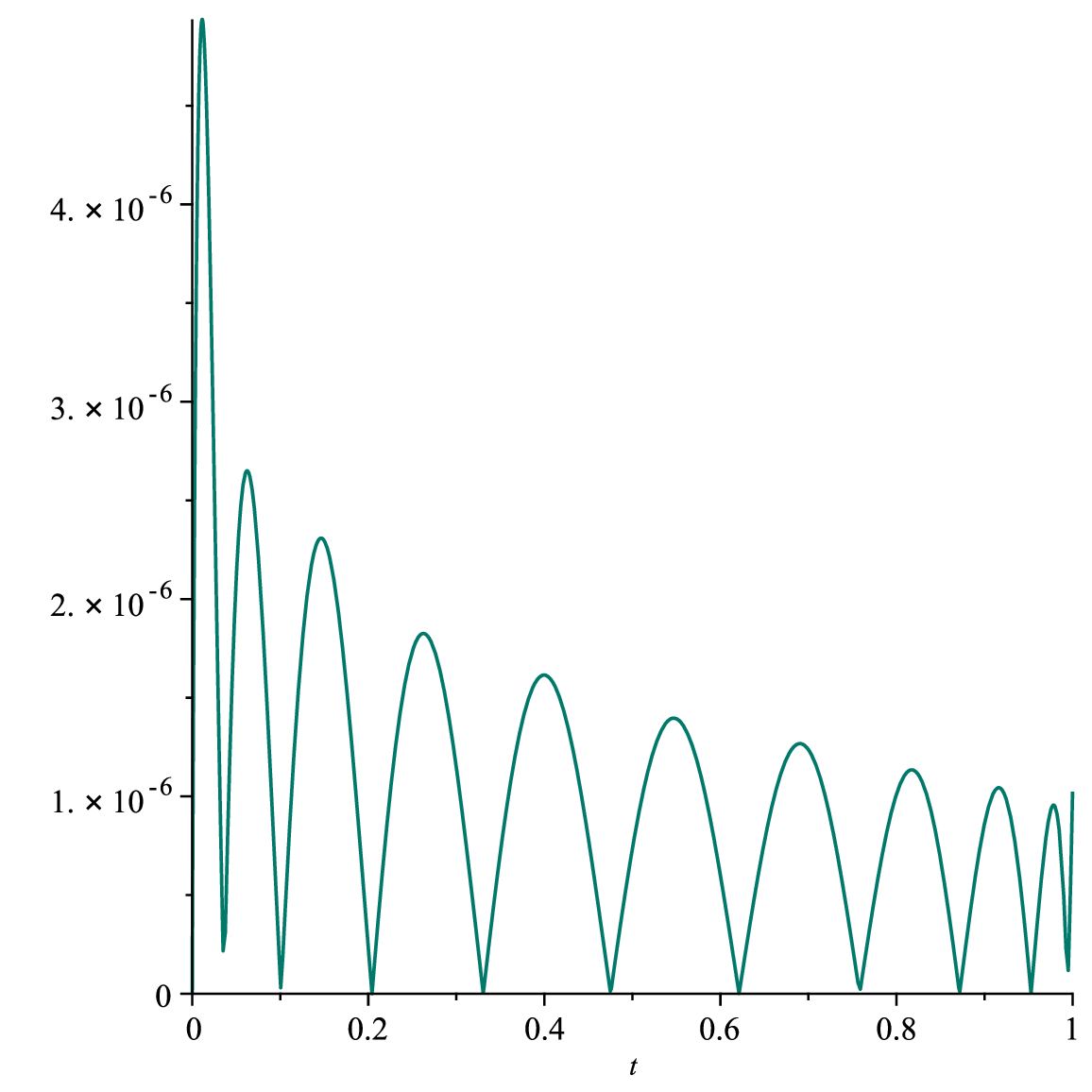}
			\caption{}

		\end{subfigure}
		\begin{subfigure}{.32\textwidth}
			\centering
			\includegraphics[width=1\linewidth]{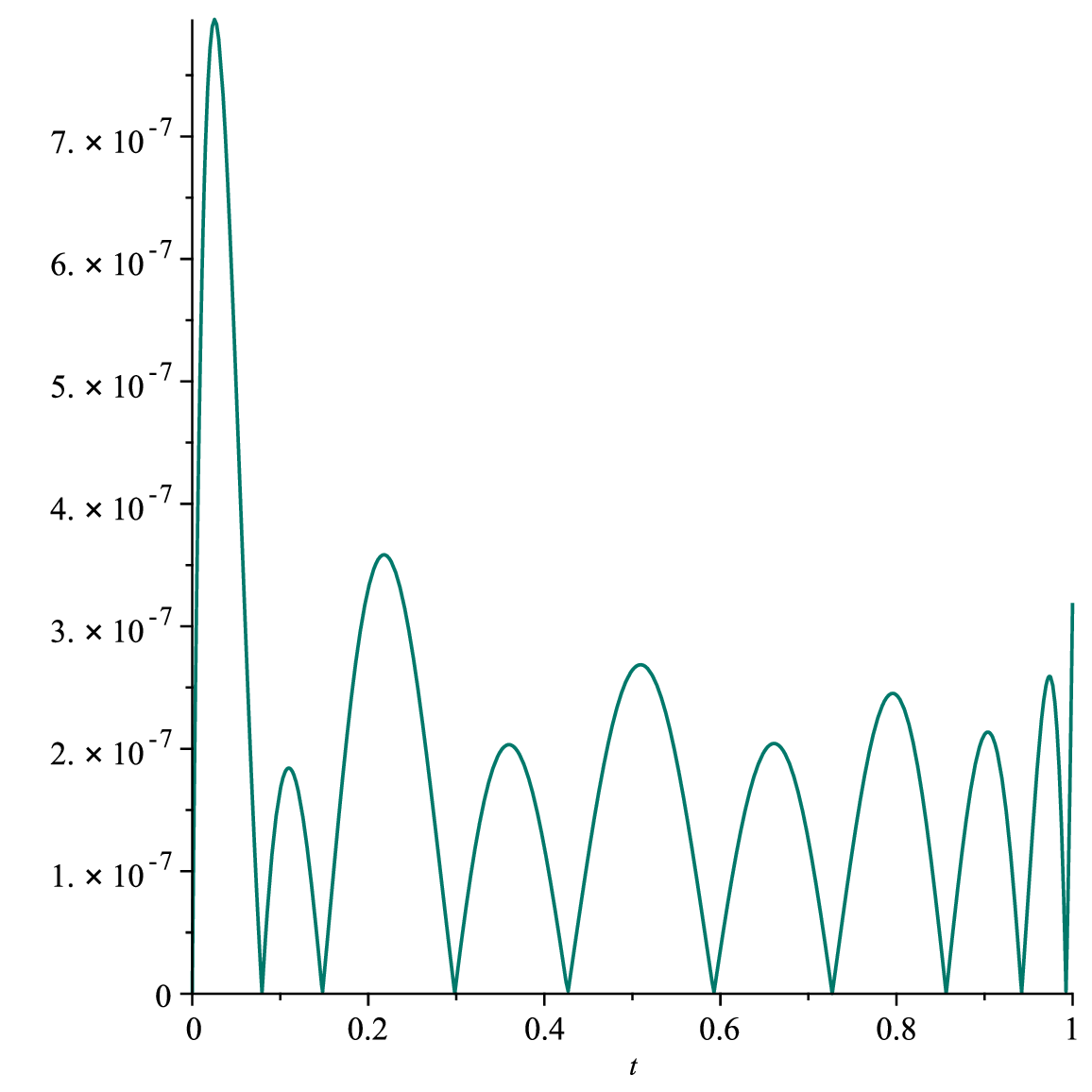}
			\caption{}

		\end{subfigure}
   \begin{subfigure}
        {.32\textwidth}
			\centering
			\includegraphics[width=1\linewidth]{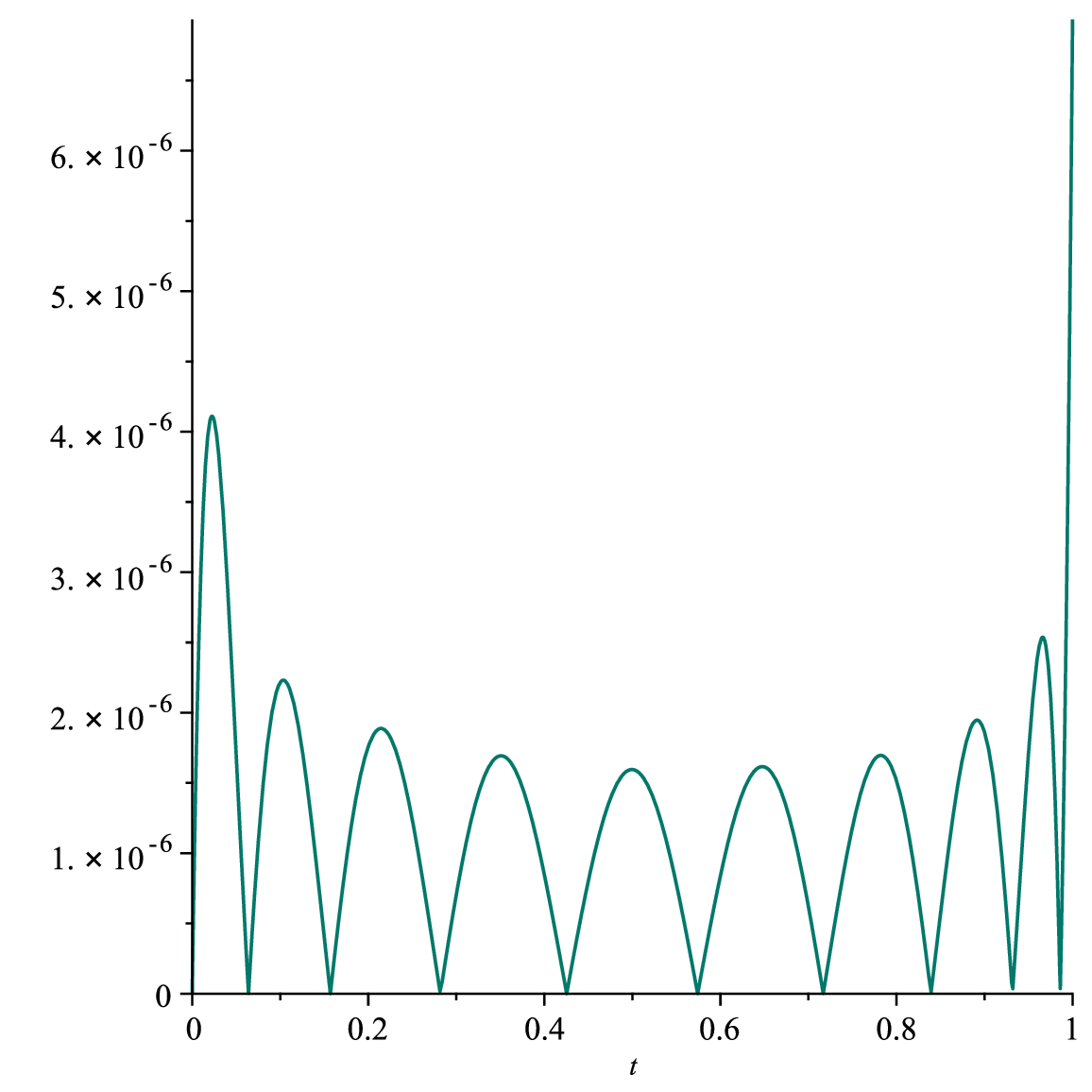}
			\caption{}

		\end{subfigure}
		\caption{(a) $|u_1(t)-\tilde{u}_1(t)|$, (b) $|u_2(t)-\tilde{u}_2(t)|$, and (c) $|u_3(t)-\tilde{u}_3(t)|$ for example \ref{ex3} with $m=10$.}\label{fig3}
        \end{figure}
\begin{table*}[]
\centering
 \captionsetup{font=scriptsize}
\caption{Exact, approximate solution, and relative error for $m=10$ on $\left[0,1\right]$ for example \ref{ex3}}.
\small
\resizebox{0.8\textwidth}{!}{
 \begin{tabular}
{|p{0.395\linewidth}|p{0.285\linewidth}|p{0.30\linewidth}|}

\begin{tabular}{@{}llll@{}}
\toprule
$t$ & $u_1(t)$ & $\Tilde{u}_1(t)$ & $\mathbb{E}_{u_1(t)}$\\ \midrule

{0.2} & 0.0178885 & 0.0178883 & $1.3 \times10^{-5}$\\
{0.4} & 0.1011928 & 0.10119127& $1.5 \times10^{-5}$  \\
{0.6} & 0.2788548 & 0.27885540 & $2.1 \times10^{-6}$  \\
{0.8} & 0.57243340 & 0.5724344 & $1.7 \times10^{-6}$ \\
{1} & 1 & 0.9999 & $1.0 \times10^{-6}$ \\
\bottomrule
\end{tabular}

&

\begin{tabular}{@{}lll@{}}
\toprule
 $u_2(t)$ & $\Tilde{u}_2(t)$ & $\mathbb{E}_{u_2(t)}$\\ \midrule
 0.04 & 0.0400003  & $8.2 \times10^{-6}$\\
 0.16 & 0.159999 & $7.4 \times10^{-7}$ \\
0.36 & 0.359999 & $9.9 \times10^{-8}$   \\
 0.64 & 0.6400002 & $3.8 \times10^{-7}$ \\
1 & 0.99999968 & $3.1 \times10^{-7}$ \\
\bottomrule
\end{tabular}
&
\begin{tabular}{@{}lll@{}}
\toprule
 $u_3(t)$ & $\Tilde{u}_3(t)$ & $\mathbb{E}_{u_3(t)}$\\ \midrule
  0.198669 & 0.198667  & $8.8 \times10^{-6}$ \\
 0.389418 & 0.389419  & $2.1 \times10^{-6}$ \\
 0.564642 & 0.564643 & $1.4 \times10^{-6}$ \\
0.717356 & 0.717354 & $2.1 \times10^{-6}$ \\
0.841470 & 0.841477  & $8.4 \times10^{-6}$ \\
\bottomrule
\end{tabular} \\
\hline
\end{tabular}}
\end{table*}\label{tab3}
        Figure \ref{fig3} illustrates the difference between the estimated solution and the exact solution for each $u_i(t)$. The relative error is computed across various intervals within the range of $[0,1]$, as indicated in Table \ref{tab3}. We compare our method with the waveform relaxation scheme proposed in \cite{ding2014waveform}. In Table \ref{tab3_1}, we present a comparison based on the $\ell_2$ norm error and the highest iteration of $N=20$, using the method described in \cite{ding2014waveform}.

      \begin{table}[h]
\centering
 \captionsetup{font=scriptsize}
\caption{Comparison of the obtained errors between our method and the method \cite{ding2014waveform}
for Example \ref{ex3} }
\label{tab3_1}
\begin{tabular}{lcc}
\hline
 & LS-SVR method &  method \cite{ding2014waveform} \\ \hline
$\|u_1(t)-\tilde{u}_1(t)\|_2$ & $2.2 \times10^{-6}$ & $\approx10^{-3}$ \\
$\|u_2(t)-\tilde{u}_2(t)\|_2$ & $5.3 \times10^{-7}$ & $\approx10^{-3}$ \\
$\|u_3(t)-\tilde{u}_3(t)\|_2$ & $7.4 \times10^{-6}$ & $\approx10^{-3}$ \\ \hline
\end{tabular}
\end{table}

\begin{example} \label{ex4}
Consider the following nonlinear fractional differential algebraic
equations
\begin{equation}
\begin{cases}

D^{\frac{1}{2}}u_1(t)+u_1(t)u_2(t)-u_3(t)=\frac{\Gamma(4)}{\Gamma(\frac{7}{2})}t^{\frac{5}{2}}+2t^4+t^7-e^t-tsin(t),
\\

D^{\frac{1}{2}}u_2(t)-\frac{\Gamma(5)}{\Gamma(\frac{9}{2})}t^{\frac{1}{2}}u_1(t)+2u_2(t)+u_1(t)u_3(t)={\frac{2}{\Gamma(\frac{3}{2})}}t^{\frac{1}{2}}+4t+2t^4+t^3e^t+t^4sin(t),
\\

u^2_1(t)-t^2u_2(t)+u_3(t)=e^t+2t^3+tsin(t), \quad t\in [0,1],
\\
\end{cases}
\end{equation}
	with the consistent initial conditions $u_1(0)=0, u_2(0)=0,$ and $u_3(0)=1$. The equation has analytical solutions $u_1(t)=t^3, u_2(t)=2t+t^4, u_3(t)=e^t+tsin(t)$.
  Table \ref{tab4} calculates the numerical results and the equation's absolute error value. Comparison of our reported results with those obtained in Table \ref{tab4_1} confirms the superiority and reliability of our proposed scheme over the method presented in \cite{ding2014waveform,ghanbari2018generalized} with the highest iteration of $N=16$.
 \end{example}
 \begin{table}[h]
\centering
 \captionsetup{font=scriptsize}
\caption{Comparison of the obtained errors between our method and methods in \cite{ding2014waveform,ghanbari2018generalized}
for Example \ref{ex4} }
\label{tab4_1}
\begin{tabular}{lccc}
\hline
 & LS-SVR method &  method \cite{ding2014waveform} &  method \cite{ghanbari2018generalized} \\ \hline
$\|u_1(t)-\tilde{u}_1(t)\|_2$ & $2.4  \times10^{-14}$ & $\approx10^{-3}$ & $2.3  \times10^{-7}$ \\
$\|u_2(t)-\tilde{u}_2(t)\|_2$ & $6.3 \times10^{-15}$ & $\approx10^{-3}$ &$9.0  \times10^{-5}$ \\
$\|u_3(t)-\tilde{u}_3(t)\|_2$ & $9.8 \times10^{-14}$ & $\approx10^{-4}$ &$4.6  \times10^{-5}$ \\ \hline
\end{tabular}
\end{table}

		\begin{figure}
		\centering
		\begin{subfigure}{.32\textwidth}
			\centering			\includegraphics[width=1\linewidth]{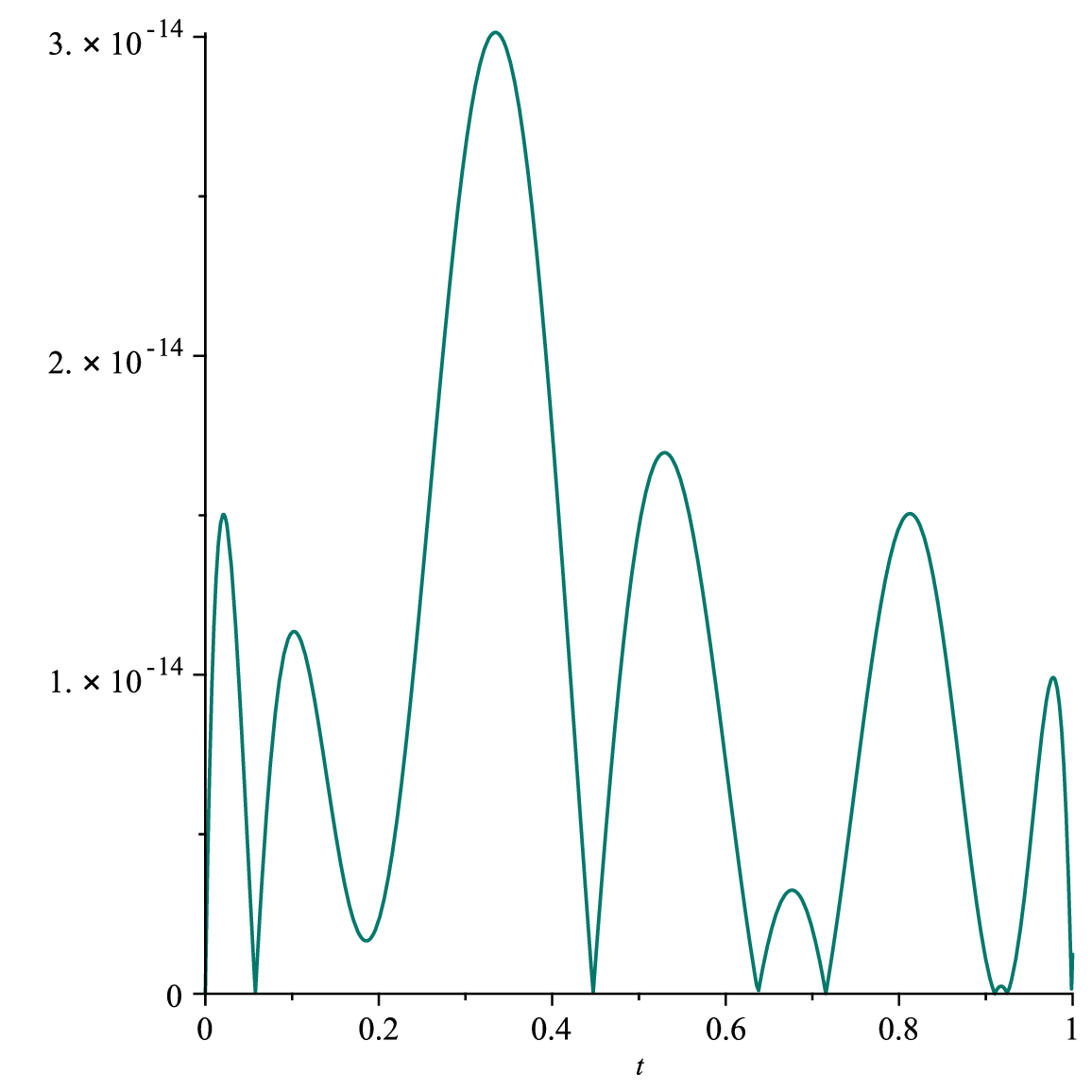}
			\caption{}

		\end{subfigure}
		\begin{subfigure}{.32\textwidth}
			\centering
			\includegraphics[width=1\linewidth]{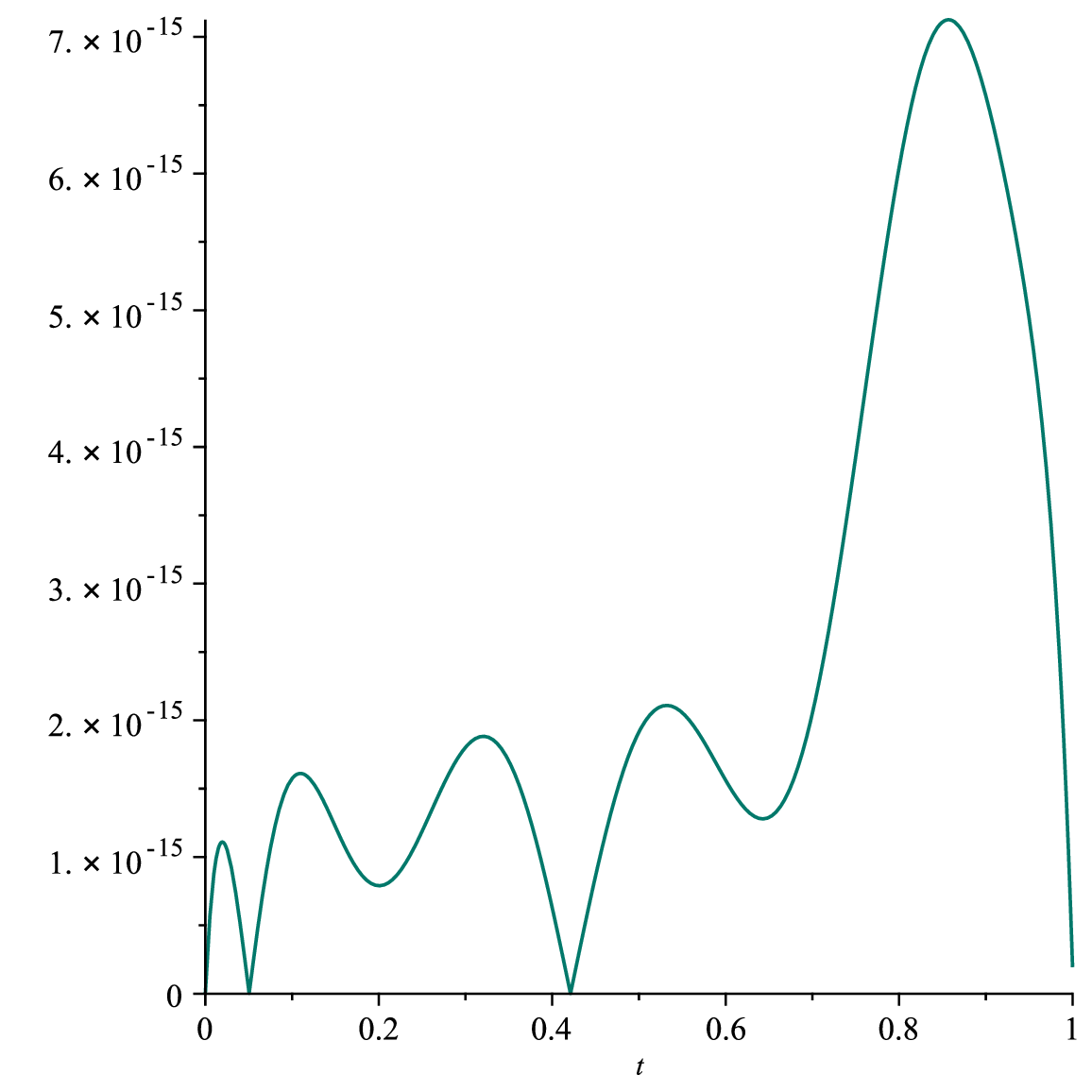}
			\caption{}

		\end{subfigure}
  		\begin{subfigure}{.32\textwidth}
			\centering
			\includegraphics[width=1\linewidth]{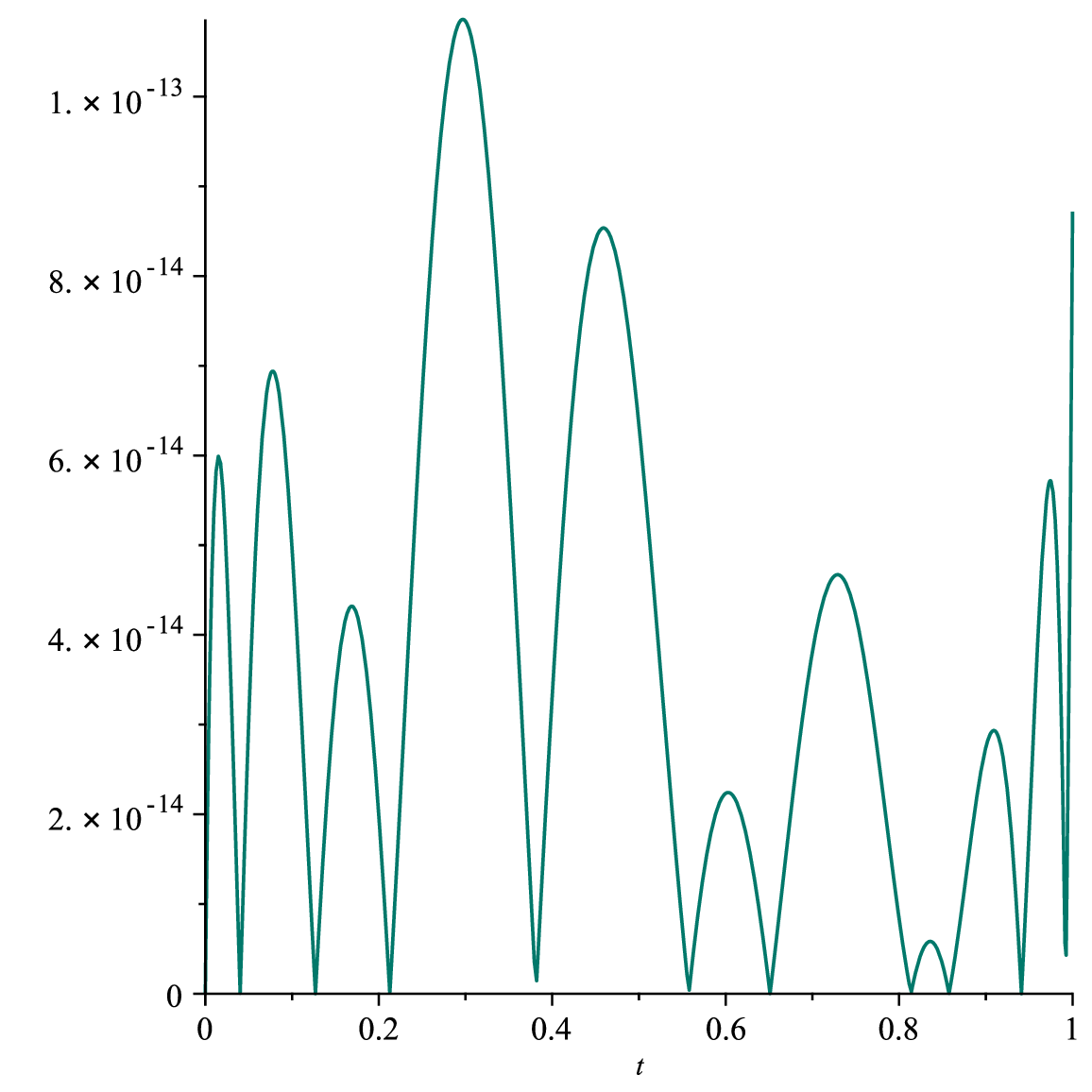}
			\caption{}

		\end{subfigure}\caption{(a) $|u_1(t)-\tilde{u}_1(t)|$, (b) $|u_2(t)-\tilde{u}_2(t)|$, and (c) $|u_3(t)-\tilde{u}_3(t)|$ for example \ref{ex4} with $m=10$.}\label{fig4}
	\end{figure}

 \begin{table*}[]
\centering
 \captionsetup{font=scriptsize}
\caption{Exact, approximate solution, and relative error for $m=10$ on $\left[0,1\right]$ for example \ref{ex4}}.
\small
\resizebox{0.9\textwidth}{!}{
 \begin{tabular}
{|p{0.33\linewidth}|p{0.30\linewidth}|p{0.48\linewidth}|}

\begin{tabular}{@{}llll@{}}
\toprule
$t$ & $u_1(t)$ & $\Tilde{u}_1(t)$ & $\mathbb{E}_{u_1(t)}$\\ \midrule

{0.2} & 0.008 & 0.007999 & $2.8 \times10^{-13}$ \\
{0.4} & 0.064 & 0.063999 &  $2.7 \times10^{-13}$   \\
{0.6} & 0.216 & 0.216000 &  $3.3 \times10^{-14}$   \\
{0.8} & 0.512 &0.512000 &  $2.8 \times10^{-14}$  \\
{1} & 1 & 0.999999 &  $1.2 \times10^{-15}$  \\
\bottomrule
\end{tabular}

&

\begin{tabular}{@{}lll@{}}
\toprule
 $u_2(t)$ & $\Tilde{u}_2(t)$ & $\mathbb{E}_{u_2(t)}$\\ \midrule
 0.4016  &0.4016000 &  $1.9 \times10^{-15}$  \\
 0.8256 & 0.8256000 &  $7.6 \times10^{-16}$   \\
 1.3296 & 1.3295999 &  $1.1 \times10^{-15}$    \\
 2.0096 & 2.0095999 &  $3.0 \times10^{-15}$  \\
 3 & 2.9999999       &  $6.4 \times10^{-17}$  \\
\bottomrule
\end{tabular}
&
\begin{tabular}{@{}lll@{}}
\toprule
 $u_3(t)$ & $\Tilde{u}_3(t)$ & $\mathbb{E}_{u_3(t)}$\\ \midrule
  1.2611366243191 & 1.2611366243192  &  $1.5 \times10^{-14}$  \\
1.64759203456473 & 1.64759203456476  &  $2.0 \times10^{-14}$   \\
 2.16090428442753 & 2.16090428442750 &  $1.0 \times10^{-14}$    \\
 2.79942580121208 & 2.79942580121209 &  $2.9 \times10^{-15}$ \\
3.559752813266 & 3.559752813267 &  $2.4 \times10^{-14}$  \\
\bottomrule
\end{tabular} \\
\hline
\end{tabular}}
\end{table*}\label{tab4}

\begin{example}\label{ex5}
Consider the partial differential-algebraic equation
\begin{equation}
\begin{cases}

D_tu_2(x,t)+D_tu_3(x,t)=-\frac{1}{2}x^2e^{-\frac{1}{2}t}+x^2cos(t),
\\

2D_tu_1(x,t)-D_tu_2(x,t)-D_tu_3(x,t)-u_2(x,t)=-2x^2e^{-t}-\frac{x^2e^{\frac{-t}{2}}}{2}-x^2cos(t),
\\

-D_{xx}u_3(x,t)+u_3(x,t)=-2sin(t)+x^2sin(t), \quad x\in[-0.5,0.5],\quad t\in[0,1],
\\
\end{cases}
\end{equation}
with the initial conditions:
\begin{center}
    $u_1(x,0)=x^2,D_tu_1(x,0)=-x^2,u_2(x,0)=x^2,D_tu_2(x,0)=\frac{-x^2}{2},u_3(x,0)=0,D_tu_3(x,0)=x^2,$
\end{center}
and whose analytical solution is $u(x,t)=\begin{bmatrix}x^2e^{-t} \\x^2e^{\frac{-t}{2}} \\x^2sin(t)\end{bmatrix}$. In Table \ref{tab5} the relative error is calculated for a number of points in the interval $[0,1]$. the absolute error  with $m=6$ and $\gamma=100$ is in Figure \ref{fig5}. Also, the relative error for various $m$ has been calculated in Table \ref{ta6}.
\end{example}

\begin{figure}[h]
		\centering
		\begin{subfigure}{.32\textwidth}
			\centering			\includegraphics[width=1\linewidth]{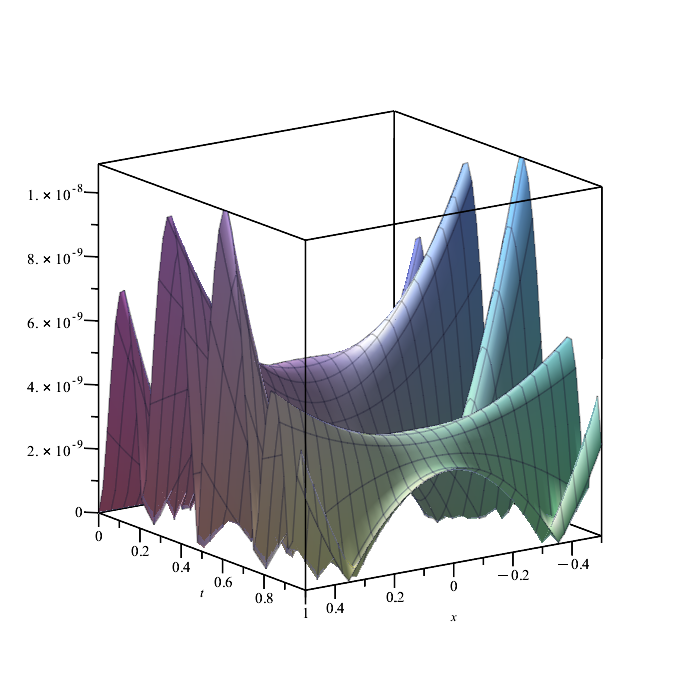}
			\caption{}

		\end{subfigure}
		\begin{subfigure}{.32\textwidth}
			\centering
			\includegraphics[width=1\linewidth]{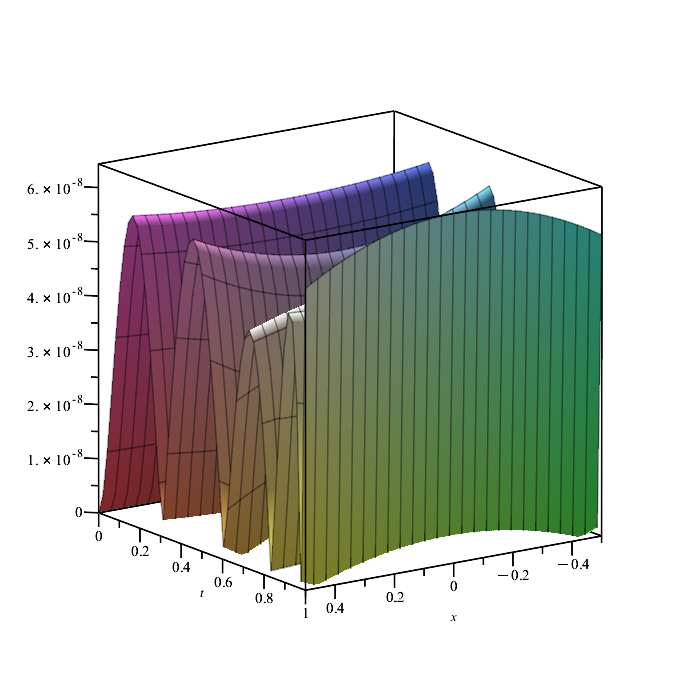}
			\caption{}

		\end{subfigure}
        \begin{subfigure}
        {.32\textwidth}
			\centering
			\includegraphics[width=1\linewidth]{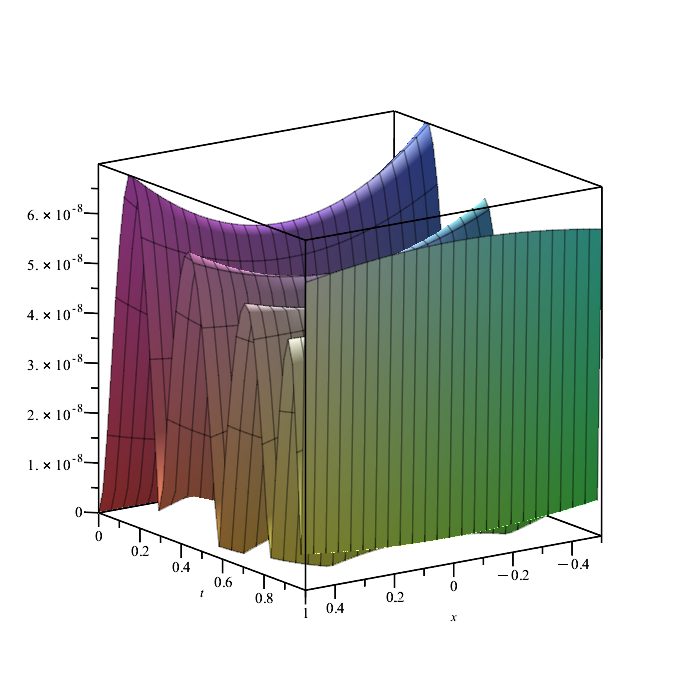}
			\caption{}

		\end{subfigure}
		\caption{(a) $|u_1(x,t)-\tilde{u}_1(x,t)|$, (b) $|u_2(x,t)-\tilde{u}_2(x,t)|$, and (c) $|u_3(x,t)-\tilde{u}_3(x,t)|$ for example \ref{ex5} with m=6.}\label{fig5}
	\end{figure}
 \begin{table*}[]
 \centering
  \captionsetup{font=scriptsize}

\caption{Exact, approximate solution, and relative error of example \ref{ex5} with $m=6$. }

\resizebox{0.9\textwidth}{!}{
 \begin{tabular}
{|p{0.56\linewidth}|p{0.38\linewidth}|p{0.41\linewidth}|}

\begin{tabular}{@{}lllll@{}}
\toprule
$t$ & $x$ &
$u_1(x,t)$ & $\Tilde{u}_1(x,t)$ & $\mathbb{E}_{u_1(x,t)}$\\ \midrule

{0.02} & 0.02 &
0.0003920794  & 0.0003920795 &$3.2 \times10^{-7}$ \\
{0.04} & 0.04 &
0.0015372631 &0.0015372635 & $2.9 \times10^{-7}$  \\
{0.06} & 0.06 &
0.00339035232 & 0.0033903531 & $2.5 \times10^{-7}$   \\
{0.08} & 0.08 &
0.005907944 & 0.005907945 & $2.1 \times10^{-7}$ \\
{0.1} & 0.1 &
0.009048374 & 0.009048375& $1.8 \times10^{-7}$ \\
\bottomrule
\end{tabular}

&

\begin{tabular}{@{}lll@{}}
\toprule
 $u_2(x,t)$ & $\Tilde{u}_2(x,t)$ & $\mathbb{E}_{u_2(x,t)}$\\ \midrule
 0.00039601 & 0.00039602  & $7.8 \times10^{-6}$ \\
0.00156831 &0.00156832& $6.8 \times10^{-6}$\\
0.00349360 & 0.00349362 & $5.8 \times10^{-6}$   \\
0.00614905 & 0.00614908  & $4.9 \times10^{-6}$ \\
0.00951229 & 0.0095123 & $4.2 \times10^{-6}$ \\
\bottomrule
\end{tabular}
&
\begin{tabular}{@{}lll@{}}
\toprule
 $u_3(x,t)$ & $\Tilde{u}_3(x,t)$ & $\mathbb{E}_{u_3(x,t)}$\\ \midrule
 0.0000079994  & 0.000007996 & $3.9\times10^{-4}$ \\
 0.00006398 & 0.00006397 & $1.6 \times10^{-4}$  \\
 0.00021587 & 0.00021584 & $9.5 \times10^{-5}$   \\
 0.00051145 & 0.00051142 & $6.0 \times10^{-5}$ \\
0.0009983& 0.0009982 & $4.0 \times10^{-5}$\\
\bottomrule
\end{tabular} \\
\hline
\end{tabular}}
\end{table*}\label{tab5}
\begin{table}[]\label{tab6}
    \centering
    \captionsetup{font=scriptsize}
    \caption{Relative error of example \ref{ex5} with different $m$ for $(x,t)=(.02,0.02),(.04,.04),(.06,.06),(.08,.08), and (.1,.1)$}
    \label{ta6}

\begin{tabular}{|c|c|c|c|}

  \hline
  $m$ & $\mathbb{E}_{u_1(x,t)}$ & $\mathbb{E}_{u_2(x,t)}$ & $\mathbb{E}_{u_3(x,t)}$ \\
  \hline
  $m=8$ & \multirow{5}{*}{\begin{tabular}{@{}c@{}}$2.9\times10^{-9}$ \\ $2.1\times10^{-9}$ \\ $1.4\times10^{-9}$ \\ $8.7\times10^{-10}$\\ $4.0\times10^{-10}$\end{tabular}} & \multirow{5}{*}{\begin{tabular}{@{}c@{}}$3.9\times10^{-8}$\\ $1.9\times10^{-8}$ \\ $5.1\times10^{-9}$ \\ $5.2\times10^{-9}$ \\ $1.2\times10^{-8}$\end{tabular}} & \multirow{5}{*}{\begin{tabular}{@{}c@{}}$1.9\times10^{-6}$ \\ $4.8\times10^{-7}$ \\ $8.3\times10^{-8}$ \\ $6.3\times10^{-8}$ \\ $1.1\times10^{-7}$\end{tabular}} \\

  & & & \\

   & & & \\

   & & & \\

   & & & \\

  \hline
  $m=10$ & \multirow{5}{*}{\begin{tabular}{@{}c@{}}$5.2\times10^{-13}$ \\ $3.6\times10^{-13}$ \\ $2.5\times10^{-13}$ \\ $1.7\times10^{-13}$ \\ $1.1\times10^{-13}$\end{tabular}} & \multirow{5}{*}{\begin{tabular}{@{}c@{}}$2.8\times10^{-11}$ \\ $1.7\times10^{-11}$ \\ $9.5\times10^{-12}$ \\ $4.6\times10^{-12}$\\ $1.5\times10^{-12}$\end{tabular}} & \multirow{5}{*}{\begin{tabular}{@{}c@{}}$1.3\times10^{-9}$\\$4.2\times10^{-10}$ \\ $1.5\times10^{-10}$ \\ $5.5\times10^{-11}$ \\ $1.5\times10^{-11}$\end{tabular}} \\

   & & & \\

   & & & \\

   & & & \\

   & & & \\
  \hline
  $m=12$ & \multirow{5}{*}{\begin{tabular}{@{}c@{}}$1.7\times10^{-15}$ \\ $1.0\times10^{-15}$ \\ $5.7\times10^{-16}$ \\ $3.0\times10^{-16}$ \\ $1.5\times10^{-16}$\end{tabular}} & \multirow{5}{*}{\begin{tabular}{@{}c@{}}$1.2\times10^{-13}$ \\ $5.5\times10^{-14}$\\ $1.8\times10^{-14}$ \\ $1.4\times10^{-15}$ \\ $5.0\times10^{-15}$\end{tabular}} & \multirow{5}{*}{\begin{tabular}{@{}c@{}}$6.1\times10^{-12}$ \\$1.3\times10^{-12}$ \\ $3.0\times10^{-13}$ \\ $1.6\times10^{-14}$ \\ $4.8\times10^{-14}$\end{tabular}} \\

   & & & \\

  & & & \\

  & & & \\

   & & & \\
  \hline

\end{tabular}
\end{table}

 \section{Conclusion} \label{sec5}
 The primary aim of this research is to establish a cohesive relationship among a machine learning model, weighted residual methods, and Legendre's orthogonal polynomials to effectively address DAEs. We adopted a system of differential-algebraic equations and employed LS-SVR to approximate their solutions. We represented the solution as a linear combination of orthogonal Legendre polynomials with unknown coefficients and subsequently formulated the residual function. This residual function was aligned with the principles of weighted residual methods by establishing its inner product with the Dirac delta function. The resulting system of equations was then adopted as the constraints for the quadratic programming problem. This approach presents notable advantages in contrast to conventional methods, including the utilization of regularization techniques for ill-conditioned problems, attainment of a singular solution with exponential convergence rate, and formulation of a quadratic programming problem leading to a symmetric positive definite matrix within a dual space. These attributes collectively contribute to the inherent simplicity of this technique, rendering it suitable for solving diverse categories of DAEs. Through the examples presented, we have applied LS-SVR for solving the Volterra integral DAE of fractional order, partial DAE, and fractional DAEs with high accuracy for solving equations. Nonetheless, this approach encounters challenges in solving DAEs with non-periodic boundary conditions. Addressing the issue within the current framework is complex and requires additional research to improve and broaden the method. Within this model, we achieved precise outcomes through the application of widely recognized Legendre orthogonal functions as the model's kernel. For future investigations, alternative kernel functions like Chebyshev, Hermite, and Gegenbauer polynomials hold potential for exploration. Furthermore, the scope of this method can be expanded to encompass the resolution of other differential equation categories.


\section{Declarations}
Declarations of interest: none

\section{Data availability statement}
not applicable.

\bibliography{references}

\end{document}